\newtheorem{theorem}{Theorem}[section]
\newtheorem{proposition}[theorem]{Proposition}
\newtheorem{lemma}[theorem]{Lemma}
\newtheorem{claim}[theorem]{Claim}
\newtheorem{remark}[theorem]{Remark}
\theoremstyle{remark}
\numberwithin{equation}{section}
\def\deg{\operatorname{deg}}%
\def\dim{\operatorname{dim}}%
\def\max{\operatorname{max}}%
\def\corank{\operatorname{corank}}%
\def\ker{\operatorname{ker}}%
\def\cli{\hbox{\rm Cliff}}
\def\pic{\hbox{\rm Pic}}
\def\deg{\operatorname{deg}}%
\def\dim{\operatorname{dim}}%
\def\max{\operatorname{max}}%
\def\ker{\operatorname{ker}}%
\def\cli{\hbox{\rm Cliff}}
\newcommand \im   {\ensuremath{\mathrm{im}}}
\newcommand \ext {\ensuremath{\mathrm{Ext}}}
\newcommand \Sec {\ensuremath{\mathrm{Sec}}}
\newcommand \lra {\rightarrow}
\def\cli{\mbox{Cliff}}
\def\deg{\mbox{deg}}
\def\cli{\hbox{\rm Cliff}}
\begin{document}

\title[Brill-Noether loci of rank two vector bundles]
	{Brill-Noether loci of rank two vector bundles on a general $\nu$-gonal curve}
\author{Youngook Choi}
\address{Department of Mathematics Education, Yeungnam University, 280 Daehak-Ro, Gyeongsan, Gyeongbuk 38541,
 Republic of Korea }
\email{ychoi824@yu.ac.kr}
%%\date{\today}
\author{Flaminio Flamini}
\address{Universita' degli Studi di Roma Tor Vergata, 
Dipartimento di Matematica, Via della Ricerca Scientifica-00133 Roma, Italy}
\email{flamini@mat.uniroma2.it}
\author{Seonja Kim}
\address{Department of  Electronic Engineering,
Chungwoon University, Sukgol-ro, Nam-gu, Incheon, 22100, Republic of Korea}
\email{sjkim@chungwoon.ac.kr}
\thanks{The first author was supported by Basic Science Research Program through the National Research Foundation of Korea(NRF) funded by the Ministry of Education(NRF-2016R1D1A3B03933342).  The third  author was supported by Basic Science Research Program through the National Research Foundation of Korea(NRF) funded by the Ministry of Education (NRF-2016R1D1A1B03930844)}

\subjclass[2010]{14H60, 14D20, 14J26}

\keywords{stable rank-two vector bundles, Brill-Noether loci, general $\nu$-gonal curves}
\begin{abstract} In this paper we study the Brill Noether locus  of rank 2, (semi)stable vector bundles with at least two sections and of suitable degrees on a general $\nu$-gonal curve. We classify its reduced components whose dimensions are at least the corresponding  Brill-Noether number.  \color{black}  We moreover describe the general member $\mathcal F$ of such components just in terms of extensions of line bundles with suitable {\em minimality properties}, providing information on the birational geometry of such components as well as on the very-ampleness of $\mathcal F$. 
\end{abstract}
\maketitle
%\thispagestyle{empty}
%%%%%%%%%%%%%%%%%%%%%%%%%%%%%%%%%%%%%%%%%%%%%%%%%%%%%%
%%%%%%%%%%%%%%%%%%%%%%%%%%%%%%%%%%%%%%%%%%%%%%%%%%%%%%
\section{Introduction}
Let $C$ denote a smooth, irreducible, complex projective curve of genus $g \geq 2$. As in the statement of \cite[Theorem]{Teixidor1} (cf.  also Theorem \ref{Teixidor} below), $C$ is said to be {\em general} if $C$ is a curve with general moduli (cf.\;e.g.\;\cite{ACGH},\;pp. 214--215). \color{black} Let $U_C(n, d)$ be the moduli space of semistable, degree $d$, rank $n$ vector bundles on $C$ and let $U^s_C(n,d)$ be the open dense  subset of stable bundles (when $d$ is odd, more precisely one has $U_C(n, d)=U^s_C(n,d)$). Let $B^k_{n,d}\subseteq U_C(n, d)$ be the {\em Brill-Noether locus} which consists of vector bundles $\mathcal F$  having $h^0(\mathcal F)\ge k$, for a positive integer $k$.  \color{black}

Traditionally, we denote by $W^k_d$ the Brill-Noether locus  $B^{k+1}_{1,d}$ of line bundles
$L\in \mbox{Pic}^d(C)$ having $h^0(L)\ge k+1$, for a non-negative integer $k$. With little 
abuse of notation, we will sometimes identify line bundles with corresponding divisor classes, 
interchangeably using multiplicative and additive notation. 

For the case of rank $2$ vector bundles, we simply put  $B^k_d:=B^k_{2,d}$, for which it is well-known that the dimension of $B_d^k \cap U^s_C(2,d)$ is at least the Brill-Noether number $\rho_d^{k}:=4g-3-ik$, where $i:=k+2g-2-d$ (cf. \cite{Sun}). This is no longer true for possible components of $B^k_d$ in $U_C(2,d) \setminus U^s_C(2,d)$, i.e. not containing stable points, which can occur only for $d$ even (cf. \cite[Remark\;3.3]{CF} for more explanations and details). \color{black} 

 In the range $0 \le d \le 2g-2$,  $B^1_d$ has been deeply studied on any curve $C$ by several authors (cf. \cite{Sun,L}). \color{black}
Concerning $B^2_d$,  using a degeneration argument, N. Sundaram \cite{Sun} proved  that $B^2_d$ is non-empty for any  $C$ and for odd $d$ such that $g\le d\le 2g-3$. M. Teixidor I Bigas generalizes Sundaram's result as follows: 

\begin{theorem}[\cite{Teixidor1}]\label{Teixidor} Given a non-singular curve $C$ and a $d$, 
 $3\le d\le 2g-1$,  $B^2_d \cap U^s_C(2,d)$ has a component of dimension $\rho^2_d=2d-3$ and a generic point on it corresponds to a vector bundle whose space of sections has dimension $2$ and the generic section has no zeroes. If $C$ is general, this is the only component of $B^2_d\cap U^s_C(2,d)$. Moreover, $B^2_d\cap U^s_C(2,d)$ has extra components 
if and only if $W^1_n$ is non-empty and $\dim  W^1_n\ge d+2n-2g-1$ for some
$n$  with $2n<d$.
\end{theorem}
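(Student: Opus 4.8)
The plan is to realize $\mathcal B$ concretely as the closure of the family of bundles obtained from $\mathcal O_C^{\oplus 2}$ by $d$ general elementary modifications, and then to show that any other component is forced, through the sub-line-bundle generated by the sections, to arise from a pencil on $C$. Concretely, I would first study bundles $\mathcal F$ sitting in $0\to\mathcal O_C^{\oplus 2}\to\mathcal F\to\tau\to 0$ with $\tau$ torsion of length $d$ supported at $d$ general points $p_1,\dots,p_d$, a general $1$-dimensional quotient of $\mathbb C^2$ being prescribed at each $p_i$; dually (quotienting $\mathcal F$ by a general section) $\mathcal F$ is a general extension $0\to\mathcal O_C\to\mathcal F\to L\to 0$ with $\deg L=d$ and class chosen so that $h^0(\mathcal F)=2$. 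Each such $\mathcal F$ lies in $B^2_d$ since $H^0(\mathcal O_C^{\oplus 2})\hookrightarrow H^0(\mathcal F)$, and for general data one checks: (i) $h^0(\mathcal F)=2$ exactly, because $H^0(\mathcal F)/H^0(\mathcal O_C^{\oplus 2})\cong\ker\bigl(H^0(\tau)\to H^1(\mathcal O_C^{\oplus 2})\bigr)$, which vanishes for general data precisely since $d\le 2g-1<2g$; (ii) the general section (lying in $H^0(\mathcal O_C^{\oplus 2})$) is nowhere zero, its possible zeros at $p_i$ being avoided by a general choice of the prescribed line; (iii) $\mathcal F$ is \emph{stable} --- a sub-line-bundle $N\subset\mathcal F$ of degree $\delta>0$ must use the modification at $\delta$ of the $p_i$, which pins down $N\cap\mathcal O_C^{\oplus 2}$ as a sub-line-bundle of $\mathcal O_C^{\oplus 2}$ through $\delta$ general prescribed directions, impossible unless $\delta<d/2$. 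This family of bundles is irreducible --- dominated by $\mathrm{Sym}^d(C\times\mathbb P^1)$ modulo $\mathrm{PGL}_2$ --- of dimension $2d-3=\rho^2_d$, with general member stable, with $h^0=2$, and with independent sections (spanning a rank-$2$ subsheaf of $\mathcal F$). Let $\mathcal B$ denote its closure in $B^2_d\cap U^s_C(2,d)$.

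Next I would classify the general member of an arbitrary component $\mathcal C$ of $B^2_d\cap U^s_C(2,d)$. After the (standard) reduction to $h^0(\mathcal F)=2$ for the general $\mathcal F\in\mathcal C$ (the locus $B^3_d$ being of smaller dimension), let $N\subseteq\mathcal F$ be the subsheaf generated by $H^0(\mathcal F)$, so $h^0(N)=h^0(\mathcal F)=2$. If $\mathrm{rk}\,N=2$, then $\mathcal O_C^{\oplus 2}\hookrightarrow\mathcal F$ with torsion cokernel of length $d$, so $\mathcal F$ belongs to the family above and $\mathcal C=\mathcal B$. If $\mathrm{rk}\,N=1$, replacing $N$ by its saturation we get a sub-line-bundle with $h^0(N)\ge 2$, i.e.\ $N\in W^1_n$ with $n:=\deg N$; stability forces $2n<d$, and $\mathcal F$ is an extension $0\to N\to\mathcal F\to M\to 0$ with $M\in\mathrm{Pic}^{\,d-n}(C)$. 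Since $N=\mathcal F/\{$torsion$\}$ of the image of $H^0(\mathcal F)\otimes\mathcal O_C$ and $M$ are recovered from $\mathcal F$, the locus $\Gamma_n$ of all such bundles has dimension at most $\dim W^1_n+g+\bigl(h^1(NM^{-1})-1\bigr)=\dim W^1_n+d+2g-2n-2$, using $\deg(NM^{-1})=2n-d<0$ and hence $h^1(NM^{-1})=d-2n+g-1$; moreover every member of $\Gamma_n$ has dependent sections.

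From this the theorem follows. First, $\mathcal B$ is a component: if $\mathcal B$ were properly contained in a component $\mathcal C$, then by the dichotomy the general member of $\mathcal C$ has either independent sections --- forcing $\mathcal C\subseteq\mathcal B$ --- or dependent sections --- forcing $\mathcal C\subseteq\Gamma_n$, hence $\mathcal B\subseteq\Gamma_n$, impossible since the general member of $\mathcal B$ has independent sections; so $\mathcal B=\mathcal C$, and its dimension is exactly $\rho^2_d$ since every component has dimension at least $\rho^2_d$ and $\dim\mathcal B\le\rho^2_d$. Second, an extra component exists iff some $\Gamma_n$ (with $2n<d$, $W^1_n\ne\emptyset$) has dimension $\ge\rho^2_d=2d-3$, equivalently $\dim W^1_n\ge d+2n-2g-1$: for then, having dimension $\ge\dim\mathcal B$, $\Gamma_n$ cannot lie inside the proper ``dependent'' sublocus of $\mathcal B$, so it is not contained in $\mathcal B$ and lies in a component distinct from $\mathcal B$; conversely any component $\mathcal C\ne\mathcal B$ has general member with dependent sections, so $\mathcal C\subseteq\Gamma_n$ for some such $n$, and $2d-3\le\dim\mathcal C\le\dim\Gamma_n$ yields the inequality on $\dim W^1_n$. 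Finally, when $C$ is general, Brill--Noether theory gives $W^1_n\ne\emptyset$ only for $\rho(g,1,n)=2n-g-2\ge0$, i.e.\ $n\ge(g+2)/2$, with $\dim W^1_n=2n-g-2$; then $2n-g-2\ge d+2n-2g-1$ reads $d\le g-1$, incompatible with $g+2\le 2n<d$, so $\mathcal B$ is the unique component.

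The main obstacle is step (iii) above: deducing \emph{strict} stability --- which really matters only when $d$ is even --- of the general elementary modification of $\mathcal O_C^{\oplus 2}$ from the generality of the $d$ prescribed directions, together with the reduction ``the general member of a component has $h^0=2$''. A sharper statement, namely that $\mathcal B$ consists of smooth points of $B^2_d$ of the expected dimension --- equivalently that the Petri-type map $H^0(\mathcal F)\otimes H^0(K_C\otimes\mathcal F^{\vee})\to H^0(K_C\otimes\mathcal F\otimes\mathcal F^{\vee})$ is injective at the general point --- would be obtained by degenerating $C$ to a suitable reducible nodal curve and carrying out the computation there.
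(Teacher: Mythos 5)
This statement is quoted from Teixidor's paper \cite{Teixidor1}; the present paper gives no proof of it, so your proposal can only be judged on its own terms. Its architecture (construct the main family explicitly, then stratify the other components by the subsheaf generated by the sections and count dimensions) is in the same spirit as Teixidor's original argument, and your dual presentation $0\to\mathcal O_C\to\mathcal F\to L\to 0$ with a nowhere--vanishing section is exactly her presentation of the generic member. But as written there are genuine gaps, two of which are serious. First, the reduction ``the general member of any component has $h^0=2$'', which you justify by ``the locus $B^3_d$ being of smaller dimension'': for an \emph{arbitrary} curve $C$ (and the ``extra components iff'' clause is stated for arbitrary $C$) the actual dimension of $B^3_d$ is not bounded by its expected dimension --- superabundance of higher Brill--Noether loci on special curves is precisely the phenomenon studied in this paper --- so this assertion is not available without proof. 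What is really needed is that no component of $B^2_d\cap U^s_C(2,d)$ is entirely contained in $B^3_d$, which requires either an elementary-transformation argument in the style of Laumon (this is how the paper disposes of the speciality $\ge 3$ case for its residual locus) or a separate dimension estimate for the strata where $h^0\ge 3$ and the sections generate a rank-two subsheaf. Without it, both the uniqueness statement and the forward implication of the ``iff'' are incomplete, since your dichotomy only covers $h^0=2$.

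Second, your stability argument (iii) only excludes destabilizing sub-line bundles whose intersection with $\mathcal O_C^{\oplus 2}$ saturates to a \emph{constant} direction. In general, if $N\subset\mathcal F$ has degree $\delta$ and meets $m\ge\delta$ of the modifications, the saturation of $N\cap\mathcal O_C^{\oplus 2}$ is a sub-line bundle of $\mathcal O_C^{\oplus 2}$ of degree $-e\le 0$, i.e.\ a degree-$e$ morphism $C\to\mathbb P^1$ whose graph passes through $m$ of the $d$ general points $(p_i,[w_i])\in C\times\mathbb P^1$, with $e\le m-\delta$. Ruling this out requires bounding, for every $e$, the dimension of the family of degree-$e$ covers (a Martens-type bound $\le e+1$ for $e\le g-1$, and $\le 2e-g+1$ beyond) against the $m\ge \lceil d/2\rceil+e$ incidence conditions; this works, but it is the whole content of the step and is absent. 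Smaller but real gaps of the same kind: the injectivity of $H^0(\tau)\to H^1(\mathcal O_C^{\oplus 2})$ for general data does not follow from $d<2g$ alone, because the image classes are decomposable, not general; the reverse implication of the ``iff'' needs a \emph{lower} bound on $\dim\Gamma_n$, hence generic stability of extensions $0\to N\to\mathcal F\to M\to 0$ with $N\in W^1_n$ and the equality $h^0(\mathcal F)=h^0(N)$ for the general class (so that $N$ is recovered and the classifying map is generically finite); and the step ``$\mathrm{rk}\,N=2$ forces $\mathcal F\in\mathcal B$'' uses that \emph{all} length-$d$ torsion quotients of $\mathcal O_C^{\oplus 2}$, not just the reduced ones at distinct points, form an irreducible $2d$-dimensional family (irreducibility of the relevant Quot scheme), which should be invoked explicitly.
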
 

Inspired by Theorem \ref{Teixidor}, in this paper we focus on $B^2_d$ for  $C$ a {\em general $\nu$-gonal curve} of genus $g$, i.e. $C$ corresponds to a general point of the $\nu$-gonal stratum $\mathcal  M^1_{g,\nu} \subset \mathcal M_g$. Precisely, we prove the following:
\begin{theorem}\label{thm:main}
Let $C$ be a general $\nu$-gonal ($3\le \nu \le \frac{g+8}{4}$) curve of genus $g$ 
and let $A$ be the unique line bundle of degree $\nu$  and $h^0(A)=2$.
For  any  positive integer $d$ with
$
2+2\nu\le d\le g-3,$ the reduced components of  $B^{2}_d$ having dimension at least $\rho_d^2$ are only two,  which we denote by $B_{\rm reg}$ and $B_{\rm sup}$:
\begin{enumerate}
\item[(i)] $B_{\rm reg}$ is  generically smooth, of dimension $\rho^{2}_d=2d-3$ ({\em regular} for short). Moreover, $\mathcal F$ general in $B_{\rm reg}$ is  stable, fitting in an exact sequence \color{black}
\begin{eqnarray*}
& 0\to\mathcal O_C(p)\to \mathcal F\to  L \to 0,
\end{eqnarray*}
where $p\in C$ and $L \in W^0_{d-1}$ are general and where $h^0(\mathcal F)=2$. 
%In this case, we have $s_1(\mathcal F)\ge 0$
\item[(ii)] $B_{\rm sup}$ is generically smooth, of dimension $d+2g-2\nu-2 > \rho^2_d$ ({\em superabundant} for short).
Moreover,  $\mathcal F$ general in $B_{\rm sup}$  is stable, fitting in an exact sequence 
$$ 0\to A\to \mathcal F\to L \to 0, $$
where $L$ is a general line bundle of degree $d-\nu$ and $h^0(\mathcal F)=2$. 
%In this case, we have $s_1(\mathcal F)=d-2n$.
\end{enumerate}
\end{theorem}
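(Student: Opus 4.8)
\medskip
\noindent
The plan is to realise $B_{\rm reg}$ and $B_{\rm sup}$ as closures of images of explicit extension families, and then to show that every irreducible component $\mathcal B\subseteq B^2_d$ with $\dim\mathcal B\ge\rho^2_d$ is one of the two. For the construction, take the family of extensions $0\to\mathcal O_C(p)\to\mathcal F\to L\to0$ with $(p,L)\in C\times W^0_{d-1}$, and the family $0\to A\to\mathcal F\to L\to0$ with $L\in\pic^{d-\nu}(C)$. Since $L$ is general we have $h^0(L)=1$, resp.\ $h^0(L)=0$, so the cohomology sequence yields $h^0(\mathcal F)=2$ for a general member; a general member is also stable, because any saturated sub-line-bundle of $\mathcal F$ is either contained in the left-hand term of the extension (of degree $1$, resp.\ $\nu<d/2$), or injects into $L$, where a Segre-invariant estimate for a general extension class keeps its degree $<d/2$. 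Counting $\dim(\text{base})+\dim\P(\ext^1)-\dim(\text{generic fibre of the map to }U_C(2,d))$ --- the fibre being finite, since no sub-line-bundle of degree $1<\nu$ moves, resp.\ since $A$ is the only $g^1_\nu$ --- gives $\dim B_{\rm reg}=1+(d-1)+(d-3)=\rho^2_d$ and $\dim B_{\rm sup}=g+(d+g-2\nu-2)=d+2g-2\nu-2$, which is $>\rho^2_d$ because $2\nu\le d-1\le g-2$; both families are irreducible, hence so are $B_{\rm reg}$ and $B_{\rm sup}$. That each is a component of $B^2_d$ through whose general point no other component passes, and is generically smooth of the stated dimension, follows from the identification $T_{[\mathcal F]}B^2_d=\ker\mu_{\mathcal F}$, where $\mu_{\mathcal F}\colon H^1(\mathcal E nd\,\mathcal F)\to\mathrm{Hom}(H^0(\mathcal F),H^1(\mathcal F))$ is the cup-product map: by the defining sequences, this reduces to injectivity of the dual Petri map $H^0(\mathcal F)\otimes H^0(K_C\otimes\mathcal F^\vee)\to H^0(K_C\otimes\mathcal E nd\,\mathcal F)$ for $\mathcal F$ general in $B_{\rm reg}$ (giving $\dim T=\rho^2_d$), and to computing its corank for $\mathcal F$ general in $B_{\rm sup}$ (giving $\dim T=d+2g-2\nu-2$, confirming superabundance), both being governed by the Petri theory of line bundles on $C$.

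For the converse, let $\mathcal F$ be general in a component $\mathcal B$ with $\dim\mathcal B\ge\rho^2_d$. If $h^0(\mathcal F)\ge3$ then $\mathcal B\subseteq B^3_d$; a parallel analysis (now involving $W^2_n$, whose members contain $2A$ on a general $\nu$-gonal curve) shows that $B^3_d$ contains no component of $B^2_d$ of dimension $\ge\rho^2_d$. If $\mathcal F$ is strictly semistable, then either a Jordan--H\"older factor $N$ satisfies $h^0(N)\ge2$, forcing $A\hookrightarrow N\hookrightarrow\mathcal F$ (treated below), or all factors have $h^0\le1$ and the locus has dimension $\le d<\rho^2_d$. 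So assume $\mathcal F$ stable with $h^0(\mathcal F)=2$, and let $N\subseteq\mathcal F$ be the saturation of the image of the evaluation map $H^0(\mathcal F)\otimes\mathcal O_C\to\mathcal F$, of rank $r\in\{1,2\}$. If $r=1$, then $N$ is a line bundle with $h^0(N)\ge2$, so $\nu\le n:=\deg N<d/2$; here is the essential input on $C$: on a general $\nu$-gonal curve every $g^1_n$ with $\nu\le n\le(g-1)/2$ equals $|A+D|$ for some effective $D$ of degree $n-\nu$, so $A\hookrightarrow N\hookrightarrow\mathcal F$. Hence $\mathcal F\in Z:=\overline{\{\mathcal F'\in U_C(2,d):\mathrm{Hom}(A,\mathcal F')\ne0\}}$, and $Z=\overline{B_{\rm sup}}$: indeed $Z$ is the image of the irreducible family of pairs $(\mathcal F',A\hookrightarrow\mathcal F')$, parametrised by the rank-$1$ degree-$(d-\nu)$ quotient $\mathcal F'/A$ together with an extension class, whose general member has $\mathcal F'/A$ a general line bundle and thus lies in $B_{\rm sup}$, while upper semicontinuity of $\mathrm{Hom}(A,-)$ gives $\dim Z\le d+2g-2\nu-2=\dim B_{\rm sup}$. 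Therefore $\mathcal B=\overline{B_{\rm sup}}$.

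If instead $r=2$, then $N\subseteq\mathcal F$ is locally free and globally generated with $h^0(N)=2$, hence $N\cong\mathcal O_C^{\oplus2}$, and $\mathcal F$ sits in $0\to\mathcal O_C^{\oplus2}\to\mathcal F\to T\to0$ with $T$ torsion of length $d$. Such $\mathcal F$ are the images of the smooth irreducible Quot scheme $\mathrm{Quot}^d_C(\mathcal O_C^{\oplus2})$, of dimension $2d$, under a morphism to $U_C(2,d)$ whose fibres are the $3$-dimensional $PGL_2$-orbits (the inclusion $\mathcal O_C^{\oplus2}\hookrightarrow\mathcal F$ being unique up to $GL(H^0(\mathcal F))$, with scalars acting trivially on the Quot scheme); so they form an irreducible locus of dimension $2d-3=\rho^2_d$, and each such $\mathcal F$ is stable, since a saturated sub-line-bundle maps to $0$ in $T$, hence lies in $\mathcal O_C^{\oplus2}$ and has nonpositive degree. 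For general $\mathcal F$ here a general section vanishes nowhere, while for each of the $d$ points $x$ underlying $T$ there is a section vanishing simply at $x$, giving $0\to\mathcal O_C(x)\to\mathcal F\to L\to0$ with $\deg L=d-1$ and $h^0(L)\ge1$; a dimension count then shows $L$ is general in $W^0_{d-1}$, so this locus is exactly $B_{\rm reg}$ and $\mathcal B=B_{\rm reg}$. Finally $B_{\rm reg}\ne\overline{B_{\rm sup}}$, since for $\mathcal F$ general in $B_{\rm reg}$ the sequence above gives $\mathrm{Hom}(A,\mathcal F)=H^0(\mathcal F\otimes A^\vee)=0$, so $B_{\rm reg}\not\subseteq Z$; thus $B_{\rm reg}$ and $B_{\rm sup}$ are the two distinct components claimed.

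The principal obstacles are two. First, the Brill--Noether theory of the general $\nu$-gonal curve that underlies the $r=1$ case and the $B^3_d$ bound --- that for the relevant degrees every pencil, resp.\ net, is $|A+D|$, resp.\ $|2A+D|$ --- which is precisely the range governed by Hurwitz--Brill--Noether theory of the splitting type of $\pi_*\mathcal O_C$. Second, the tangent space computation for the superabundant $B_{\rm sup}$: identifying $\ker\mu_{\mathcal F}$ with $T B_{\rm sup}$ requires pushing the extension $0\to A\to\mathcal F\to L\to0$ carefully through the cup-product map, the superabundance reflecting that $h^0(A)=2$ exceeds the generic value $1$.
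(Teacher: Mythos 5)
Your route is genuinely different from the paper's (you work directly with $B^2_d$ instead of the Serre-dual locus $B^{k_2}_{4g-4-d}$, and you run the converse through the evaluation map and a Quot-scheme of torsion modifications of $\mathcal O_C^{\oplus 2}$ rather than through the paper's ``special effective quotient line bundle'' lemma and the corank strata $\mathcal W_t$), but as written it has genuine gaps. The most serious one is in the construction of $B_{\rm reg}$: for a \emph{general} class $u\in\ext^1(L,\mathcal O_C(p))$ the coboundary $H^0(L)\to H^1(\mathcal O_C(p))$ is \emph{not} zero (its Serre dual is multiplication by the nonzero section of $L$, hence injective), so the general extension has $h^0(\mathcal F_u)=1$, not $2$. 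To land in $B^2_d$ you must restrict to the codimension $g-1$ linear subspace of $\ext^1(L,\mathcal O_C(p))$ where the coboundary vanishes --- note that $\dim\mathbb P(\ext^1(L,\mathcal O_C(p)))=g+d-4$, not the $d-3$ you use, and $d-3$ is exactly the dimension of the projectivized kernel subspace, so your count tacitly assumes this restriction. Once restricted, stability of a general member of that \emph{subspace} no longer follows from a Segre-invariant estimate for a general extension class: you must show the subspace is not contained in the relevant secant variety, which is precisely the technical heart of the paper's argument (there the subspace $\widehat{\mathcal W}_1$ is identified with the span $\langle\varphi(D)\rangle$ and compared with $\Sec_h(X)$ via Lange--Narasimhan). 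Likewise, that $B_{\rm reg}$ and $B_{\rm sup}$ are generically smooth \emph{components} of the stated dimensions requires the actual Petri/cup-product corank computations (for $B_{\rm sup}$ the paper uses the base-point-free pencil trick together with $h^0(2A)=3$, i.e.\ $h^1(2A)=g+2-2\nu$); you defer these as ``governed by Petri theory of line bundles,'' but they are the substance of the claim, not a routine reduction.

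In the converse there are further gaps. In the $r=2$ branch the stability argument is incorrect: a saturated sub-line bundle of $\mathcal F$ need not map to zero in the torsion quotient $T$ (for instance $\mathcal O_C\oplus\mathcal O_C(D)$ contains $\mathcal O_C^{\oplus 2}$ with torsion quotient and is unstable), so ``each such $\mathcal F$ is stable'' fails; the containment $\mathcal B\subseteq\overline{\pi(\mathrm{Quot})}$ with $\dim\le 2d-3$ can be salvaged, but the identification of this locus with the extension family (generality of $L$ in $W^0_{d-1}$, $h^0(\mathcal F)=2$ generically, stability) rests on a ``dimension count'' you never perform --- and it cannot lean on your $B_{\rm reg}$ construction, which has the defect above. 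The case $h^0(\mathcal F)\ge 3$ is dismissed by an unexecuted ``parallel analysis''; the paper excludes it by a different, non-trivial argument (such bundles are singular points of $B^{2}_d$ and Laumon's elementary-transformation lemma shows no component can lie entirely in the deeper locus), and it is not evident that your sketch via $W^2_n$ closes this case with comparable rigor. Your $r=1$ reduction to $Z=\overline{B_{\rm sup}}$ is sound in outline and uses the same kind of input the paper takes from Arbarello--Cornalba (every pencil of degree at most roughly $(g-1)/2$ on a general $\nu$-gonal curve contains $A$), but it too is stated rather than proved.
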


A more precise statement of this result is given in Theorem \ref{thm:main2}  for  its {\em residual} version (i.e. concerning the isomorphic Brill Noether locus $B^{2g-d}_{4g-4-d}$). \color{black}  Indeed, for any non negative integer $i$, if one sets $k_i := d-2g+2+i$ and $$B_d^{k_i} : = \{\mathcal F \in U_C(2,d)\;|\: h^0(\mathcal F) \ge k_i\} =  \{\mathcal F \in U_C(2,d)\;|\: h^1(\mathcal F) \ge i\},$$one has natural isomorphisms $B_d^{k_i} \simeq B_{4g-4-d}^{i}$, arising from the correspondence $\mathcal F \to \omega_C \otimes \mathcal F^*$, Serre duality and semistability (cf.\; Sect.\;\ref{ss:seg}).  The key ingredients of our approach are the geometric theory of extensions introduced by Atiyah, Newstead, Lange-Narasimhan et al. (cf.\;e.g.\cite{LN}), Theorem \ref{CF} below and suitable parametric computations involving special and effective quotient line bundles and related families of sections of ruled surfaces, which make sense  in the set-up of Theorem \ref{thm:main2}.  Finally, by Theorems  \ref{Teixidor} and  \ref{thm:main}, we can also see that  a general vector bundle in $B_{\rm reg}$ admits  a special section whose zero locus is of degree one while its general section has no zeros (cf. the proof of \cite[Theorem]{Teixidor1} and Remark \ref{rem:min} (ii) below). 

For standard \color{black} terminology, we refer the reader to \cite{H}.

\smallskip

\noindent
{\bf Acknowledgements}. The authors thank KIAS and Dipartimento di Matematica Universita' di Roma "Tor Vergata" for the warm atmosphere and hospitality during the collaboration and the preparation of this article.

%%%%%%%%%%%%%%%%%%%%%%%%%%%%%%%%%%%%%%%%%%
%%%%%%%%%%%%%%%%%%%%%%%%%%%%%%%%%%%%%%%%%%%

\section{Preliminaries}
\subsection{Preliminary results on general $\nu$-gonal curves} In this section we will review \color{black}  some results concerning line bundles on general $\nu$-gonal curves, which will be used in the paper.

\begin{lemma}\label{S2} (cf. \cite[Corollary\;1]{KK}) On a general $\nu$-gonal curve of genus $g \ge 2\nu-2$, with 
$\nu \ge 3$, there does not exist a $g^r_{\nu - 2 + 2r}$ with $\nu - 2 + 2r \le g-1$, $r \ge 2$.  
\end{lemma}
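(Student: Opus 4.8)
The plan is to argue by contradiction and convert the putative existence of such a linear series into a dimension count on the Hurwitz scheme $\mathcal H_{g,\nu}$ of degree-$\nu$ covers $C\to\P^1$, whose image in $\mathcal M_g$ is the $\nu$-gonal stratum $\mathcal M^1_{g,\nu}$, of dimension $2g+2\nu-5$. So suppose a general $\nu$-gonal curve $C$ of genus $g\ge 2\nu-2$ (with $\nu\ge 3$) carried a line bundle $L$ with $d:=\deg L=\nu-2+2r$, $h^0(L)=r+1\ge 3$ and $d\le g-1$. First I would record the relevant numerology: by Riemann--Roch $h^1(L)=g-\nu-r+2$, and combining $d\le g-1$ with $r\ge 2$ forces $h^1(L)\ge 3$ as well (the case $h^1(L)=2$ would give $g=\nu+r$, incompatible with $\nu-2+2r\le g-1$); moreover $\mathrm{Cliff}(L)=d-2r=\nu-2$, so $L$ would compute the Clifford index of $C$ with both $h^0$ and $h^1$ at least $3$ — intuitively impossible on a general $\nu$-gonal curve, where the Clifford index is computed only by the gonal pencil $A$ and its residual. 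Making this rigorous is the content of the lemma.

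The precise argument I would run is a parameter count by degeneration: specialize $C$, together with its gonal pencil, to an admissible cover of a chain of rational curves — equivalently, to a stable $\nu$-gonal curve assembled from lower-genus $\nu$-gonal blocks and elliptic tails — and bound, via limit-linear-series theory, the dimension of $W^r_d(C)$ for a general $\nu$-gonal $C$ by the Pflueger invariant
\[
\overline{\rho}_\nu(g,r,d):=\max_{0\le \ell\le r}\bigl(g-(r+1-\ell)(g-d+r-\ell)-\ell\nu\bigr)
\]
(here $\min(r,\,g-d+r)=r$ since $d<g$). An elementary optimization over $\ell$ then shows $\overline{\rho}_\nu(g,r,\nu-2+2r)<0$ throughout the range $g\ge 2\nu-2$, $\nu-2+2r\le g-1$: the summand is concave in $\ell$ with successive difference $2\nu-2-g+2\ell$, so its maximum sits near $\ell_0=(g-2\nu+2)/2$; at the endpoint $\ell=r$ it already equals $-(\nu-2)(r-1)<0$, and one checks that the peak value near $\ell_0$ is negative too (in the extreme regimes $g=2\nu-2$ and $d=g-1$ it is $(2\nu-2)-(r+1)(\nu-r)$ and roughly $\tfrac14\nu^2-r(\nu-2)-1$, both negative, the general case interpolating), the two hypotheses being precisely what is needed. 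Since a non-empty $W^r_d$ has dimension $\ge 0$, this forces $W^r_d(C)=\emptyset$; equivalently, the locus in $\mathcal H_{g,\nu}$ along which such an $L$ persists is a proper closed subset, its image in $\mathcal M^1_{g,\nu}$ is then proper too, and the general $\nu$-gonal curve avoids it, which is what we wanted.

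The step I expect to be the main obstacle is the uniform control of $\dim W^r_d(C)$ as $C$ ranges over the $\nu$-gonal stratum: a priori this dimension could jump on a positive-dimensional family of special $\nu$-gonal curves, and one must exclude that it jumps so far as to fill up $\mathcal M^1_{g,\nu}$. This is precisely what the admissible-cover degeneration is designed to do, through a careful bookkeeping of the ramification sequences allowed at the nodes of the chain; it is also the place where the hypothesis $g\ge 2\nu-2$ genuinely enters, as it ensures that the gonal pencil of the general member is unique and that the scrollar invariants of the cover are as balanced as possible, properties on which both the limit-linear-series analysis and the numerology of the first paragraph depend. (Taking the modern Brill--Noether theory of $k$-gonal curves as a black box, the whole argument collapses to the inequality $\overline{\rho}_\nu(g,r,\nu-2+2r)<0$ above; the classical parameter count just sketched is in effect what is carried out in the cited reference \cite{KK}.)
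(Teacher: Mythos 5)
The paper does not actually prove this lemma: it is quoted directly from Keem--Kim \cite[Corollary 1]{KK}, where it is established by classical arguments as part of the computation of the Clifford index of a general $(e+2)$-gonal curve; so your proposal is a genuinely different route, deducing the statement from the modern Brill--Noether theory of $\nu$-gonal curves, i.e. from the bound $\dim W^r_d(C)\le\overline{\rho}_\nu(g,r,d)$ for $C$ a general $\nu$-gonal curve (Pflueger's theorem; cf. also Jensen--Ranganathan), reduced to the purely numerical inequality $\overline{\rho}_\nu(g,r,\nu-2+2r)<0$. That inequality is indeed true in the stated range: setting $f(\ell)=g-(r+1-\ell)(g-\nu-r+2-\ell)-\ell\nu$ one finds $f(\ell+1)-f(\ell)=g-2\nu+2-2\ell$ (your stated difference has the opposite sign, a harmless slip, since concavity and the peak location $(g-2\nu+2)/2$ are as you say), so $f$ is concave with real vertex at $(g-2\nu+3)/2$; if the vertex is $\ge r$ the maximum over $[0,r]$ is $f(r)=-(r-1)(\nu-2)<0$, while if it lies in $[0,r)$ the vertex value $g-(r+1)(g-\nu-r+2)+\tfrac14(g-2\nu+3)^2$ is decreasing in $g$ in that regime and is negative at the smallest admissible $g$, precisely because $g\ge 2\nu-2$ and $\nu-2+2r\le g-1$; this clean two-case check should replace your heuristic ``roughly negative'' endpoint evaluations. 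The substantive caveat is your middle paragraph: the admissible-cover/limit-linear-series degeneration is not something one carries out in passing --- the required upper bound on $\dim W^r_d$ for general gonal curves is a deep theorem whose known proofs are tropical/logarithmic, and it is also not what \cite{KK} does --- so your argument is correct only in the ``black box'' form you yourself indicate, namely as a reduction of the lemma to that theorem plus elementary arithmetic. As for what each approach buys: the paper's citation is self-sufficient and historically matched to the statement, while your reduction makes transparent that the lemma is forced by gonal Brill--Noether numerology alone and shows exactly where the hypotheses $g\ge 2\nu-2$ and $\nu-2+2r\le g-1$ enter.
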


\noindent 
The {\em Clifford index} of  a line bundle $ L$ on a curve $C$ is defined by 
$$\cli(L) := \deg( L) -  2 (h^0( L)-1).$$

\begin{theorem}[{\cite{K}, Theorem 2.1}]\label{S1} Let $C$ be a general $\nu$-gonal curve of genus $g\ge 4$, $\nu\ge 4$, and let $g^1_\nu$ be the unique pencil of degree $\nu$ on $C$. If $C$ has a line bundle $L$ with
$\cli(L)\le \frac{g-4}{2}$ and $\deg L\le g-1$, then 
 $| L|=(\mbox{dim}| L|) g^1_\nu +B$, for some effective divisor $B$. 
\end{theorem}

%%%%%%%%%%%%%%%%%%%%%%%%%%%%%%%%%%%%%%%%%%%%%%%%%%%%%%
%%%%%%%%%%%%%%%%%%%%%%%%%%%%%%%%%%%%%%%%%%%%%%%%%%%%%%
\subsection{Segre invariant and semistable vector bundles}\label{ss:seg} 
Given a rank $2$ vector bundle $\mathcal F$ on $C$, the \textit{Segre invariant} $s_1(\mathcal F) \in \mathbb{Z}$ of $\mathcal F$ is defined by
\[s_1(\mathcal F) = \min_{N \subset \mathcal F} \left\{   \deg \mathcal F  -   2\; \deg N  \right\},
\]
where $N$ runs through all the sub-line bundles of $\mathcal F$.  It easily follows from the definition that $s_1(\mathcal F) = s_1(\mathcal F \otimes L)$, for any line bundle $L$, and 
$s_1(\mathcal F) = s_1 (\mathcal F^*)$, where $\mathcal F^*$ denotes the dual bundle of $\mathcal F$. A sub-line bundle $ N \subset \mathcal F$ is called a \textit{maximal sub-line bundle} of $\mathcal F$ if $\deg N$ is maximal among all sub-line bundles of $\mathcal F$; in such a case $\mathcal F/N$ is a \textit{minimal  quotient line bundle} of $\mathcal F$, i.e. is of minimal degree among quotient line bundles of $\mathcal F$ . In particular, $\mathcal F$ 
is {\em semistable}  (resp. {\em stable}) if and only if $s_1(\mathcal F) \ge 0$ (resp.   $s_1(\mathcal F) > 0$). 

%%%%%%%%%%%%%%%%%%%%%%%%%%%%%%%%%%%%%%%%%%%%%%%%%%%%%%%%%%%%%%%%%%
\subsection{Extensions, secant varieties and semistable vector bundles}
Let $\delta$ be a positive integer. Consider $L\in \pic^\delta(C)$ and $N\in\pic^{d-\delta}(C)$. 
 The extension space $\ext^1(L,N)$ parametrizes isomorphism classes of extensions and any element $u\in\ext^1(L,N)$ gives rise to a degree $d$, rank $2$ vector bundle $\mathcal F_u$, fitting in an exact sequence
\begin{equation}\label{degree}(u):\;\; 0 \to N \to \mathcal F_u \to L \to 0.\end{equation}

\noindent
We fix once and for all the following notation:
\begin{eqnarray}\label{degree1}
j:=h^1(L), & l:=h^0(L)=\delta-g+1+j,\\
r:=h^1(N), & n:=h^0(N)=d-\delta-g+1+r \notag
\end{eqnarray}

In order to get
$\mathcal F_u$ semistable, a necessary condition is 
\begin{equation}\label{eq:neccond}
2\delta-d  \ge    s_1(\mathcal F_u)\ge 0.
\color{black}
\end{equation}
In such a case, the Riemann-Roch theorem gives 
\begin{equation}\label{eq:m}
\dim(\ext^1(L,N))=
\begin{cases}
\ 2\delta-d+g-1\ &\text{ if } L\ncong N \\
\ g\ &\text{ if } L\cong  N.
\end{cases}
\end{equation}
Since we deal with {\em special} vector bundles, i.e. $h^1(\mathcal F_u) >0$, they always admit a special quotient line bundle. Recall the following: 
\begin{theorem}[\cite{CF}, Lemma 4.1]\label{CF}
Let $\mathcal F$ be a semistable, special, rank $2$ vector bundle on $C$ of 
degree $d\ge 2g-2$. 
 Then there exist a special, effective line bundle $L$ on $C$  of degree $\delta \leq d$, $N\in \pic^{d-\delta}(C)$ and $u\in\ext^1(L,N)$ such that  $\mathcal F=\mathcal F_u$  as in \ref{degree}.
\color{black}
\end{theorem}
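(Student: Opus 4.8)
The plan is to reduce the statement to exhibiting a single sub-line bundle $N \subset \mathcal F$ whose quotient $L := \mathcal F/N$ is at the same time special and effective: once this is done, the corresponding exact sequence $0 \to N \to \mathcal F \to L \to 0$ is classified by some $u \in \ext^1(L,N)$ with $\mathcal F \cong \mathcal F_u$, and the inequality $\delta := \deg L \le d$ will drop out of the same estimates. So the whole proof amounts to a suitable choice of $N$.

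To produce it I would exploit the speciality of $\mathcal F$. By Serre duality $\operatorname{Hom}(\mathcal F, \omega_C) \cong H^1(\mathcal F)^\vee \ne 0$, so fix a nonzero $\varphi \colon \mathcal F \to \omega_C$. Since $\varphi \ne 0$ and $\mathcal F$ has rank $2$, its image is a rank-$1$ subsheaf of the line bundle $\omega_C$, hence of the form $\omega_C(-D)$ for an effective divisor $D \ge 0$; in particular $\mathcal F/\ker\varphi$ is torsion-free, so $N := \ker\varphi$ is a saturated rank-$1$ subsheaf, i.e.\ a sub-line bundle of $\mathcal F$, and $L := \mathcal F/N \cong \omega_C(-D)$. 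Speciality of $L$ is then automatic, since by Serre duality $h^1(L) = h^1(\omega_C(-D)) = h^0(\mathcal O_C(D)) \ge 1$, $D$ being effective; thus $j > 0$.

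It remains to see that $L$ is effective, and this is exactly where the hypothesis $d \ge 2g-2$ enters, through a squeeze on $\deg D$. On one side, semistability of $\mathcal F$ gives $\deg N \le d/2$, hence $\delta = \deg L = d - \deg N \ge d/2 \ge g-1$; on the other side, $L \hookrightarrow \omega_C$ forces $\delta \le 2g-2$. Therefore $\deg D = 2g-2-\delta$ lies in $[0,\,g-1]$, and Riemann--Roch now yields
\[
h^0(L) = h^0(\omega_C(-D)) = h^1(\mathcal O_C(D)) = h^0(\mathcal O_C(D)) + \bigl((g-1)-\deg D\bigr) \ge 1 + 0 = 1,
\]
so $l > 0$; moreover $\delta \le 2g-2 \le d$. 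This gives all the asserted properties. The argument is short, and the only delicate points are the observation that $\ker\varphi$ is automatically saturated — so that $L$ is genuinely a line bundle embedded in $\omega_C$ — and, above all, the numerical squeeze $g-1 \le \delta \le 2g-2$: its lower bound is precisely the footprint of the hypothesis $d \ge 2g-2$ together with semistability, and it is what confines $D$ to degree $\le g-1$, which is what makes $\omega_C(-D)$ effective. I expect this squeeze to be the main (and essentially the only) obstacle.
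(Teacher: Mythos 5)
Your argument is correct: the statement is quoted in the paper from \cite[Lemma 4.1]{CF} without an included proof, and your construction (a nonzero $\varphi\colon\mathcal F\to\omega_C$ from $h^1(\mathcal F)>0$, giving $L=\omega_C(-D)$ with $D\ge 0$ effective, plus the semistability bound $\deg N\le d/2$ and $d\ge 2g-2$ to force $\deg D\le g-1$ and hence $h^0(K_C-D)\ge g-\deg D\ge 1$) is exactly the standard argument behind that lemma. Nothing further is needed.
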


 Tensor  \eqref{degree}  by $N^{-1}$ and consider $\mathcal G_e:=\mathcal F_u\otimes N^{-1},$ which fits in 
$$ (e):\;\;\; 0\to\mathcal O_C\to\mathcal G_e\to L -N\to 0, $$
where $e\in\ext^1(L - N,\mathcal O_C)$, so $\deg(\mathcal G_e)=2\delta-d$. Then $(u)$ and $(e)$ define the same point in $\mathbb P:=\mathbb P(H^0(K_C+L-N)^*)$.  When the map $\varphi:=\varphi_{|K_C+L-N|}:C\to\mathbb P$ is a morphism, set $X:=\varphi(C)\subset \mathbb P$. 
For any positive integer $h$ denote by $\Sec_h(X)$ the $h^{st}$-secant variety of $X$, 
defined as the closure of the union of all linear subspaces $\langle \varphi(D)\rangle\subset\mathbb P$,
for general divisors $D$ of degree $h$ on $C$. One has
$$\dim(\Sec_h(X))=\min\{\dim(\mathbb P), 2h-1\}.$$ 

\begin{theorem} (\cite[Proposition 1.1]{LN})\label{LN}
Let $2\delta-d\ge 2$;  then $\varphi$ is a morphism and, for any integer
$ s \equiv 2\delta-d\  \text{ (mod\ 2) }$  such that  $ 4+ d-2\delta\le s\le 2\delta-d,$ one has
$$ s_1 (\mathcal E_e)\ge s \Leftrightarrow e\notin \Sec_{\frac{1}{2}(2\delta-d+s-2)}(X). $$
\end{theorem}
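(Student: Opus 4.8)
The plan is to work directly with the normalized extension $\mathcal G_e$. Put $M:=L-N$, a line bundle of degree $e_0:=2\delta-d\ge 2$, so that $\mathcal G_e$ fits in $0\to\mathcal O_C\to\mathcal G_e\to M\to 0$ with $e\in\ext^1(M,\mathcal O_C)\cong H^1(M^{-1})\cong H^0(K_C+M)^*$ via Serre duality; hence $\mathbb P=\mathbb P(H^0(K_C+M)^*)$ and $\varphi=\varphi_{|K_C+M|}$, with $X=\varphi(C)$. Since $\deg(K_C+M)=2g-2+e_0\ge 2g$, the line bundle $K_C+M$ is globally generated, so $|K_C+M|$ is base-point free and $\varphi$ is a morphism; this settles the first assertion.

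Next I would translate the Segre-invariant condition into the language of sub-line bundles: writing $t_{\max}$ for the maximal degree of a sub-line bundle of $\mathcal G_e$, one has $s_1(\mathcal G_e)=e_0-2t_{\max}$, so (since $\tfrac12(e_0-s)\in\Z$) the inequality $s_1(\mathcal G_e)\ge s$ holds iff $\mathcal G_e$ has no sub-line bundle of degree $\ge t_0:=\tfrac12(e_0-s)+1$. The hypothesis $s\le 2\delta-d=e_0$ gives $t_0\ge 1$, hence any such sub-line bundle $B$ maps nonzero, thus injectively, to $M$, so $B\hookrightarrow M(-D)\hookrightarrow M$ for an effective divisor $D$ with $\deg D=e_0-\deg B\le e_0-t_0=h$, where $h:=\tfrac12(2\delta-d+s-2)$; note $h\ge 1$ by the hypothesis $s\ge 4+d-2\delta$. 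Conversely, if for some effective $D$ with $\deg D\le h$ the inclusion $M(-D)\hookrightarrow M$ lifts to a map $M(-D)\to\mathcal G_e$, the saturation of its (necessarily injective) image is a sub-line bundle of degree $\ge e_0-\deg D\ge t_0$. So it remains to determine, for a fixed effective $D$, when $M(-D)\hookrightarrow M$ lifts to $\mathcal G_e$.

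Such a lift exists exactly when the pullback of the extension class $e$ along $M(-D)\hookrightarrow M$ vanishes, i.e. when $e$ lies in the kernel of $\ext^1(M,\mathcal O_C)\to\ext^1(M(-D),\mathcal O_C)$. Identifying this with $H^1(M^{-1})\to H^1(M^{-1}(D))$ and applying Serre duality (using $H^0(M^{-1})=0$, as $\deg M^{-1}<0$), the dual map is the inclusion $H^0(K_C+M-D)\hookrightarrow H^0(K_C+M)$ of sections vanishing on $D$; therefore the kernel in question, viewed inside $\ext^1(M,\mathcal O_C)=H^0(K_C+M)^*$, is the annihilator of $H^0(K_C+M-D)$, which projectivizes to the linear span $\langle\varphi(D)\rangle\subseteq\mathbb P$. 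Hence $M(-D)$ embeds into $\mathcal G_e$ if and only if $e\in\langle\varphi(D)\rangle$. Combining with the previous paragraph, $\mathcal G_e$ has a sub-line bundle of degree $\ge t_0$ iff $e\in\langle\varphi(D)\rangle$ for some effective $D$ of degree $\le h$, that is iff $e\in\Sec_h(X)$ — here one uses $\Sec_{h'}(X)\subseteq\Sec_h(X)$ for $h'\le h$ and the fact that $\Sec_h(X)$ is set-theoretically the union of the spans $\langle\varphi(D)\rangle$ over $D\in C^{(h)}$, the relevant incidence variety over the complete symmetric product $C^{(h)}$ being complete. Negating yields $s_1(\mathcal G_e)\ge s\Leftrightarrow e\notin\Sec_h(X)$ with $h=\tfrac12(2\delta-d+s-2)$, as asserted.

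I expect the main obstacle to be the bookkeeping that keeps track of the difference between an injective subsheaf $M(-D)\hookrightarrow\mathcal G_e$ and its saturation as a sub-line bundle, so that the degrees line up exactly with the index $h$; intertwined with this is the computational heart of the argument, namely the Serre-duality identification of the $\ext^1$-pullback map with the restriction-of-sections map $H^0(K_C+M-D)\hookrightarrow H^0(K_C+M)$. A secondary point requiring care is the verification that the set-theoretic union of the spans $\langle\varphi(D)\rangle$ over all effective divisors of a given degree is closed and coincides with $\Sec_h(X)$, including in degenerate configurations where $\varphi(D)$ fails to impose independent conditions.
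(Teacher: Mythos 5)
The paper does not prove this statement at all---it is quoted verbatim from Lange--Narasimhan \cite[Proposition 1.1]{LN}---and your argument is a correct reconstruction of the standard proof given in that reference: base-point-freeness of $|K_C+M|$ from $\deg(K_C+M)\ge 2g$, the translation of $s_1(\mathcal G_e)\ge s$ into the absence of sub-line bundles $M(-D)$ with $\deg D\le h$, and the Serre-duality identification of the lifting condition with $e\in\langle\varphi(D)\rangle$. Your flagged ``degenerate configurations'' worry evaporates here, since $h\le 2\delta-d-1$ forces $\deg(K_C+M-D)>2g-2$, so every such $D$ imposes independent conditions on $|K_C+M|$; the spans $\langle\varphi(D)\rangle$ all have dimension $h-1$, the incidence variety is a $\mathbb P^{h-1}$-bundle over $C^{(h)}$, and its (closed) image is exactly $\Sec_h(X)$, completing your argument.
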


\section{The main result}\label{s:mainres}  In this section $C$ will denote a general $\nu$-gonal  curve of genus $g\ge 4$ and $A$ the unique line bundle of degree  $\nu$ with $h^0(A)=2$.  As explained in the Introduction, from now on we will be concerned with the residual version of Theorem \ref{thm:main}; therefore we set 
\begin{eqnarray}\label{eq:ourbounds}
3\le \nu \le \frac{g+8}{4} \;\; {\rm and} \;\;3g-1\le d\le 4g-6-2 \nu, 
\end{eqnarray}where $d$ is an integer. 
For suitable line bundles $L$ and $N$ on $C$, we consider
rank $2$ vector bundles $\mathcal F$ arising as extensions. We will give conditions on $L$ and $N$ under which $\mathcal F$ is general in  a certain component of the Brill-Noether locus $B^{k_2}_d$,  where  $k_2=d-2g+4$ as in Introduction. We moreover show that $L$ is a quotient of $\mathcal F$ with suitable {\em minimality} properties. Finally, we prove the following theorem.
%%%%%%%%%%%%%%%%%%%%%%%%%%%%%%%%%%%%%%%%%%%%%%%%%%%%%%%%%%%%%%%%%%%%%%%%%%%%%%%%

\begin{theorem}\label{thm:main2} 
The reduced components of $B^{k_2}_d$ having dimension at least $\rho_d^{k_2}$ are only two,  which we denote by $B_{\rm reg}$ and $B_{\rm sup}$:
\begin{enumerate}
\item[(i)]
 The component $B_{\rm reg}$ is {\em regular},  i.e. generically smooth and of  dimension $\rho^{k_2}_d=8g-2d-11$. \color{black}   A general element $\mathcal F$ of $B_{\rm reg}$ is stable, fitting in an exact sequence
\begin{equation}\label{exactB0} 
 0\to K_C-D \to \mathcal F\to K_C-p\to 0,  
\end{equation}where $p\in C$ and $D\in C^{(4g-5-d)}$ are general. Specifically,  $s_1(\mathcal F) \ge 1$ (resp., $2$)  if $d$ is odd (resp., even). Moreover, $K_C-p$ is minimal among special quotient line bundles of $\mathcal F$ and $\mathcal F$ is very ample for $\nu \ge 4$;  
\item[(ii)] The component $B_{\rm sup}$ is generically smooth,  of dimension $6g-d-2 \nu -6 > \rho^{k_2}_d$, i.e. $B_{\rm sup}$ is {\em superabundant}. A general element $\mathcal F$ of $B_{\rm sup}$ is stable, very-ample, fitting in an exact sequence
\begin{equation}\label{exactB1}
0\to N\to \mathcal F\to K_C-A\to 0, 
\end{equation}
for $N\in\pic^{d-2g+2+\nu}(C)$ general. Moreover, 
$s_1(\mathcal F)=4g-4 - d - 2\nu$ and $K_C-A$ is a minimal quotient of $\mathcal F$.
\end{enumerate}
\end{theorem}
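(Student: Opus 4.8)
\smallskip
\noindent\emph{Strategy of proof.} The plan is to analyse $B^{k_2}_d$ through the geometric theory of extensions. Every $\mathcal F\in B^{k_2}_d$ is special, since $h^1(\mathcal F)\ge k_2-\chi(\mathcal F)=2$, and has degree $d\ge 3g-3\ge 2g-2$, so Theorem \ref{CF} applies: $\mathcal F=\mathcal F_u$ fits in an exact sequence $0\to N\to \mathcal F\to L\to 0$ with $L$ special and effective of degree $\delta\le d$, $N\in\pic^{d-\delta}(C)$ and $u\in\ext^1(L,N)$; one may moreover take $L$ minimal among the special effective quotient line bundles of $\mathcal F$. First I would fix the discrete invariants $\delta=\deg L$, $l=h^0(L)$, $j=h^1(L)\ge 1$ and $r=h^1(N)$, and record the two constraints that drive the whole argument: semistability forces $\delta\ge d/2$, while Lemma \ref{S2} — which says that a general $\nu$-gonal curve carries no linear series more special than those forced by the pencil $A$ — together with the known behaviour of the Brill--Noether loci $W^r_e(C)$ on such a curve, sharply limits which special $L$, equivalently which series $K_C-L$, can occur (the first non-trivial cases beyond $j=1$ being $K_C-L=A$, then $K_C-L=2A$, and so on). From the cohomology sequence of $(u)$ one gets $h^0(\mathcal F_u)=h^0(N)+h^0(L)-\rank(\partial_u)$, where $\partial_u\colon H^0(L)\to H^1(N)$ is the connecting map; since $h^0(N)+h^0(L)=d-2g+2+r+j$ by Riemann--Roch, the condition $h^0(\mathcal F_u)\ge k_2$ becomes the rank bound $\rank(\partial_u)\le r+j-2$.

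\smallskip
The core of the proof is then a dimension count. For each admissible choice of discrete data I would form the incidence variety $\mathcal I$ of triples $(L,N,[u])$ with $L$ in the relevant locus of special line bundles of degree $\delta$, $N$ in a Brill--Noether stratum of $\pic^{d-\delta}(C)$, and $[u]\in\mathbb P\ext^1(L,N)$ subject to the determinantal condition $\rank(\partial_u)\le r+j-2$; bounding from below the codimension of this last locus in the standard way for loci of degenerate linear maps (using the factorization of $\partial_u$ through the multiplication map $H^0(L)\otimes H^0(K_C-N)\to H^0(K_C-N+L)$ and Serre duality) gives an estimate of $\dim\mathcal I$. Subtracting the dimension of the general fibre of $\mathcal I\to B^{k_2}_d$, which is controlled by $\mathrm{Hom}(\mathcal F,L)$ modulo automorphisms, i.e.\ by how $L$ occurs as a quotient of $\mathcal F$, one bounds the dimension of the image of each stratum. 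The decisive point is to show that this bound is strictly less than $\rho^{k_2}_d=8g-2d-11$ in every case except two: when $j=h^1(L)=1$, which forces $L=K_C-p$ and, after maximizing, $N=K_C-D$ with $D\in C^{(4g-5-d)}$, giving exactly $\rho^{k_2}_d$ and the sequence \eqref{exactB0}; and when $j\ge 2$, where Lemma \ref{S2} and the structure of $W^1_e(C)$ single out $K_C-L=A$, i.e.\ $L=K_C-A$, $\delta=2g-2-\nu$, with $N$ free in all of $\pic^{d-2g+2+\nu}(C)$, giving the strictly larger value $6g-d-2\nu-6$ and the sequence \eqref{exactB1}; every remaining stratum — in particular those with $K_C-L$ a multiple $mA$, $m\ge 2$, or with $N$ in a proper Brill--Noether stratum — is shown to yield dimension $<\rho^{k_2}_d$. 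That the two surviving loci are genuine irreducible components follows since the estimate shows neither is contained in a larger subvariety of $B^{k_2}_d$, and they are distinct because their general members have quotient line bundles of different degrees ($2g-3$ versus $2g-2-\nu$). I expect this exhaustive case analysis — eliminating the intermediate values of $j$, the higher multiples of $A$, and the mixed Brill--Noether strata for $N$ on a general $\nu$-gonal curve — to be the main technical obstacle.

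\smallskip
It then remains to establish the asserted geometry of the two components; the construction above already presents $B_{\rm reg}$ (resp.\ $B_{\rm sup}$) as the image of the projective bundle of extensions \eqref{exactB0} over the moduli of pairs $(p,D)$ (resp.\ of extensions \eqref{exactB1} over $\pic^{d-2g+2+\nu}(C)$). For $B_{\rm reg}$: generic smoothness and the equality $\dim B_{\rm reg}=\rho^{k_2}_d$ I would get by showing that the Petri-type cup-product map $H^0(\mathcal F)\otimes H^0(\omega_C\otimes\mathcal F^*)\to H^0(\omega_C\otimes\mathcal F\otimes\mathcal F^*)$ is injective at a general $\mathcal F=\mathcal F_u$, checking it on the associated graded bundle $(K_C-D)\oplus(K_C-p)$ and using the genericity of $p$ and $D$; uniruledness follows since a general extension \eqref{exactB0} is stable with $h^0(\mathcal F)=k_2$, so $B_{\rm reg}$ is swept out by positive-dimensional projective spaces not contracted by the parametrization; the minimality of $K_C-p$ among special quotients of $\mathcal F$ is precisely the non-existence of a special quotient of smaller degree, again read off from the stratum estimates and Lemma \ref{S2}; and very-ampleness for $\nu\ge 4$, $d\ge 3g-1$ amounts to excluding, for a general $\mathcal F$, any sub-line bundle of degree $\ge\deg\mathcal F-1$ creating a base point or identifying two points, which I would deduce from Theorem \ref{LN} applied to $\mathcal G_e=\mathcal F_u\otimes N^{-1}$ together with a dimension count of the secant varieties $\Sec_h(X)$ of $X=\varphi_{|K_C-N+L|}(C)$. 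For $B_{\rm sup}$: since for a general $\mathcal F$ both $N$ and $[u]$ are determined, the parametrization is birational, so $B_{\rm sup}$ is ruled (birational to a projective bundle over $\pic^{d-2g+2+\nu}(C)$); generic smoothness of dimension $6g-d-2\nu-6$ is again a maximal-rank computation for the cup-product map on $N\oplus(K_C-A)$; and both $s_1(\mathcal F)=4g-4-d-2\nu$ — whence $K_C-A$ is a minimal quotient, since $2\delta-d=4g-4-d-2\nu$ bounds $s_1$ from above by \eqref{eq:neccond} — and very-ampleness for $d+\nu\ge 3g+1$ follow from Theorem \ref{LN}: for general $N$ and general $e$ the class $e$ avoids the relevant secant varieties $\Sec_h(X)$ of $X=\varphi_{|2K_C-A-N|}(C)$ because $2h-1<\dim\mathbb P$ in the stated numerical range, so $s_1(\mathcal F_u)$ attains the maximal value $2\delta-d$ permitted by \eqref{eq:neccond}.
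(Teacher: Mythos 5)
Your overall framework (presenting a general member of any component via Theorem \ref{CF}, stratifying by the invariants $(\delta,l,j,r)$, Petri-type maps for generic smoothness, Theorem \ref{LN} for stability) is the same as the paper's, but the decisive step of your uniqueness argument fails as stated. You claim that every stratum other than the two distinguished ones can be shown to have dimension $<\rho^{k_2}_d$. This is false: by \cite[Theorem 2.6]{AC}, $h^1(L)=2$ on a general $\nu$-gonal curve forces $L\cong K_C-A-B_b$ with a base divisor $B_b\in C^{(b)}$, $b\ge 0$ (not just $L=K_C-A$, and not $K_C-L=mA$), and the strata with $b>0$ and $N$ \emph{general non-special} have dimension up to $6g-d-2\nu-6-b$, which exceeds $\rho^{k_2}_d=8g-2d-11$ by $d-2g-2\nu+5-b>0$ throughout the range \eqref{eq:ourbounds} (use $d\ge 3g-3$ and $g\ge 2\nu+2$). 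So a pure dimension count cannot eliminate them; one must prove they are \emph{contained in} $\overline{B_{\rm sup}}$ rather than being separate components. The paper does this via a flat specialization from extensions of $K_C-A$ by $N$ to extensions of $K_C-A-B_b$ by $N+B_b$, granted by the surjectivity $H^1(N+A-K_C)\twoheadrightarrow H^1(N+2B_b+A-K_C)$ (elementary transformations, as in Laumon), together with a tangent-space estimate; nothing in your proposal plays this role. Likewise, components whose general member has $h^1(\mathcal F)\ge 3$ are excluded in the paper not by your stratified count but by observing such bundles lie in ${\rm Sing}(B^{k_2}_d)$ and invoking \cite[Lemme 2.6]{L}; your proposal is silent on this case.

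Two further points. For $B_{\rm sup}$ the cup-product map is \emph{not} of maximal rank: its kernel is $\ker(\mu_{0,A})\oplus\ker(\mu_{A,N})\cong H^0(K_C-2A)\oplus H^0(N-A)$, of dimension $(g+2-2\nu)+(d-3g+3)>0$ (base-point-free pencil trick plus $h^0(2A)=3$ on a general $\nu$-gonal curve); generic smoothness comes from the equality $\dim T_{\mathcal F}=4g-3-2k_2+\dim\ker\mu_{\mathcal F}=6g-d-2\nu-6=\dim B_{\rm sup}$, so you must compute the kernel exactly, not prove injectivity. Finally, your route to very-ampleness through secant varieties controls $s_1(\mathcal F)$, not very-ampleness; the paper instead uses that an extension of very ample line bundles is very ample, checking very-ampleness of $K_C-A$ via Lemma \ref{S2}, of a general $N$ of degree $\ge g+3$, of $K_C-p$ via Riemann--Roch for $\nu\ge4$, and of $K_C-D$ via a Martens--Mumford dimension count; this also explains where the hypotheses $d\ge 3g-1$, resp. $d+\nu\ge 3g+1$, enter, which your sketch does not account for.
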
 

\begin{proof} In Sect.\;\ref{ss:superabundant}  and \ref{ss:regular} we will construct the components $B_{\rm sup}$ and $B_{\rm reg}$, respectively, and prove all the statements in Theorem \ref{thm:main2} except for the minimality property of $K_C-p$  in (i) and the uniqueness of  $B_{\rm sup}$ and $B_{\rm reg}$, which will be proved in Sect.\;\ref{ss:noother}.  \end{proof}

\begin{remark}\label{rem:17Jan} {\normalfont (i) As explained in the Introduction, Theorem \ref{thm:main2} and the natural isomorphism $B^{k_2}_d\simeq B^2_{4g-4-d}$ give also a proof of Theorem \ref{thm:main}.

\smallskip

\noindent
(ii)    It is well-known how the study of rank 2 vector bundles on curves is related to that of (surface) scrolls in projective space. Therefore, very-ampleness condition in Theorem \ref{thm:main2}  is a key for the study of components of Hilbert schemes of smooth  scrolls, in a suitable projective space, dominating $\mathcal M^1_{g,\nu}  $. This will be the subject of a forthcoming paper.

 } 
\end{remark}

%%%%%%%%%%%%%%%%%%%%%%%%%%%%%%%%%%%%%%%%%%%%%%%%%%%%%%
\subsection{The superabundant component $B_{\rm sup}$}\label{ss:superabundant}  In this section we first construct the component $B_{\rm sup}$ as in Theorem \ref{thm:main2} . We consider the line bundle $L:= K_C-A \in W^{g-\nu}_{2g-2-\nu}$ and a general  $N \in \pic^{d-2g+2+\nu}(C)$;  
since $d-2g+2+\nu \ge g+1+\nu$ from \eqref{eq:ourbounds}, in particular $h^1(N)=0$. We first need the following preliminary result.

\begin{lemma}\label{lem:i=2.2} Let $N \in \pic^{d-2g+2+\nu}(C)$ be general. Then,
for a general $u\in\ext^1(K_C-A, N)$, the corresponding rank $2$ vector bundle $\mathcal F_u$ is  stable with:
\begin{enumerate}
\item[(a)]  $h^1(\mathcal F_u)= h^1(K_C -A) = 2$;
\item[(b)] $s_1(\mathcal F_u)= 4g-4 - 2\nu -d$;  more precisely, $K_C-A$ is a minimal quotient line bundle of  $\mathcal F_u$;
\item[(c)] $\mathcal F_u$ is very ample. 
\end{enumerate} 
\end{lemma}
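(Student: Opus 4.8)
The plan is to analyze each of the three claims (a), (b), (c) separately, exploiting the extension sequence $0\to N\to\mathcal F_u\to K_C-A\to 0$ together with the geometric theory of extensions (Theorem \ref{LN}) and the non-existence results for special linear series on general $\nu$-gonal curves (Lemma \ref{S2}). Since $N$ is general of degree $d-2g+2+\nu\ge g+2$, we have $h^1(N)=0$, so the long exact cohomology sequence immediately gives $h^1(\mathcal F_u)=h^1(K_C-A)=h^0(A)=2$, which is claim (a); for the lower bound $h^0(\mathcal F_u)\ge k_2=d-2g+4$ one computes from $h^0(\mathcal F_u)=h^1(\mathcal F_u)+\deg\mathcal F_u+2(1-g)$, and for generic $u$ one checks the extension is such that $h^0(\mathcal F_u)$ equals exactly this value (i.e. the connecting map $H^0(K_C-A)\to H^1(N)=0$ is automatically zero, so actually $h^0(\mathcal F_u)=h^0(N)+h^0(K_C-A)$, and stability below will force $N$ not to contribute extra sections beyond the expected count — this needs a short argument).

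For stability and claim (b): the key is the twisted sequence $0\to\mathcal O_C\to\mathcal G_e\to (K_C-A)-N\to 0$ of degree $2\delta-d$ with $\delta=2g-2-\nu$, so $\deg\mathcal G_e=2g-2-2\nu-d<0$ for our range — wait, that is negative, which signals that I should instead work with the dual or with $\mathcal F_u$ directly. The cleaner route: to show $s_1(\mathcal F_u)\ge 4g-4-2\nu-d$ we must show that no sub-line bundle $M\subset\mathcal F_u$ has $\deg M>\frac12(d-(4g-4-2\nu-d))=d-2g+2+\nu=\deg N$. A destabilizing (or more-stable-than-claimed) $M$ either maps to zero in $K_C-A$, forcing $M\hookrightarrow N$ hence $\deg M\le\deg N$ with equality iff $M=N$ (giving the quotient $K_C-A$, as claimed), or maps nontrivially to $K_C-A$, so $M\hookrightarrow K_C-A$ and the composite $N\hookrightarrow\mathcal F_u\to (K_C-A)/M$ shows $\mathcal F_u$ is an extension involving $M$ and a degree-$(d-\deg M)$ line bundle; a dimension count on the space of such "more special than general" extensions, using \eqref{eq:m} and the bound on $\dim W^r_e$ coming from Lemma \ref{S2} (no $g^r_{\nu-2+2r}$), shows that for general $u$ no such $M$ with $\deg M>\deg N$ occurs. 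This dimension count — comparing $\dim\ext^1(K_C-A,N)$ with the dimension of the locus of extensions split by some $M$ of too-large degree — is the heart of the matter and is where I expect the main obstacle: one has to be careful about which sub-line bundles are forced to be special, invoke Lemma \ref{S2} to bound their moduli, and sum over the discrete parameters $\deg M$.

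For claim (c), very-ampleness: I would use the standard criterion that $\mathcal F_u$ is very ample iff for every length-two subscheme $\xi=z_1+z_2$ (possibly $z_1=z_2$) on $C$ the evaluation map $H^0(\mathcal F_u)\to H^0(\mathcal F_u\otimes\mathcal O_\xi)$ is surjective. Twisting the extension sequence by $\mathcal O_C(-\xi)$ and chasing cohomology, surjectivity reduces to (i) very-ampleness, or the relevant separation property, of $K_C-A$ — which holds when $\deg(K_C-A)=2g-2-\nu$ is large enough and $K_C-A$ has no unexpected sections, again controlled by Lemma \ref{S2} for $\nu\ge 4$ — together with (ii) a vanishing of the form $H^1(N-\xi)=0$, i.e. $h^1$ of a general line bundle of degree $d-2g+\nu\ge \deg N-2$; the numerical hypothesis $d+\nu\ge 3g+1$ is exactly what guarantees this $h^1=0$ (so that $N-\xi$ is nonspecial) and simultaneously that the sections of $N$ suffice to separate the part of $\xi$ not separated by the quotient. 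I would spell out the two boundary subcases $z_1\ne z_2$ and $z_1=z_2$ (tangent vectors) in parallel. The subtlety here is that very-ampleness of an extension is not automatic from very-ampleness of sub and quotient; one genuinely needs the connecting maps to vanish, which is why the precise degree bound enters, and checking this cleanly for the general $u$ (as opposed to a special one) requires combining the cohomology vanishing with the generality of $N$.
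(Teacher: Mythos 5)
Your parts (a) and (c) are essentially the paper's argument: (a) is immediate from $h^1(N)=0$, and for (c) the paper likewise reduces to very ampleness of $K_C-A$ (no $g^2_{\nu+2}$ by Lemma \ref{S2}, which works already for $\nu\ge 3$ -- your ``$\nu\ge 4$'' is not needed here) together with very ampleness of the general $N$ of degree $d-2g+2+\nu\ge g+3$, the hypothesis $h^1(N)=0$ making the evaluation/lifting chase you describe go through. The problem is part (b), which is the heart of the lemma and which you do not actually prove.

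Concretely, two things go wrong. First, an arithmetic slip derails you: with $\delta=\deg(K_C-A)=2g-2-\nu$ one has $2\delta-d=4g-4-2\nu-d$, which by \eqref{eq:ourbounds} satisfies $1\le 2\delta-d\le g-1-2\nu$; it is \emph{not} $2g-2-2\nu-d$ and it is not negative. Because of this you discard the Lange--Narasimhan machinery, which is exactly the tool the paper uses: for $3g-3\le d\le 4g-6-2\nu$ one has $2\delta-d\ge 2$, Theorem \ref{LN} applies, and since $\dim\bigl(\Sec_{\frac{1}{2}(2\delta-d+s-2)}(X)\bigr)=2\delta-d+s-3<2\delta-d+g-2=\dim\mathbb P(\ext^1(K_C-A,N))$ for $s\le 2\delta-d$, a general $u$ lies off the relevant secant variety, giving $s_1(\mathcal F_u)\ge 2\delta-d$, with equality (and minimality of the quotient $K_C-A$) because $N\subset\mathcal F_u$ already computes $2\delta-d$. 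Second, your substitute -- a dimension count over destabilizing sub-line bundles $M$, invoking Lemma \ref{S2} and bounds on $\dim W^r_e$ -- is only announced, and you yourself flag it as ``the heart of the matter'' and ``the main obstacle''; as written there is no estimate of the locus of extensions admitting such an $M$, so stability and the value of $s_1$ are not established. (Such a count would in effect re-derive the secant-variety picture, since $M=(K_C-A)(-E)\subset\mathcal F_u$ corresponds to $u$ lying in the span of $\varphi(E)$.) Note also that the boundary case $d=4g-5-2\nu$, where $2\delta-d=1$ and Theorem \ref{LN} is not applicable, needs a separate argument; the paper handles it directly, using that $d$ is odd, by showing a sub-line bundle $M$ with $\deg M\ge 2g-2-\nu$ could map to $K_C-A$ neither by zero, nor strictly injectively (degrees), nor isomorphically (the extension would split, against generality). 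Your sketch does not isolate or treat this case.
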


\begin{proof}  To ease notation, set $L=K_C-A$ and $\delta:=\deg L$. 
To show that $\mathcal F_u$ is stable, note that the upper bound on $d$ in  \eqref{eq:ourbounds} implies  $2\delta-d = 2 (2g-2-\nu) - d \ge 2$; so we are in position to apply  Theorem \ref{LN}. 
 We consider the natural morphism
$$\varphi:=\varphi_{|K_C+L-N|}: C{\longrightarrow} \mathbb P:=\mathbb P(\ext^1(L,N)).$$
Set $X:=\varphi(C)$. Let  $s$  be an integer such that $ s\equiv 2\delta-d\  {\rm{ (mod \;2)}}$ and $ 0<s\le 2\delta - d$.   \color{black}
Since $s\le 2\delta-d=4g-4-2 \nu-d<g-3$, we have 
$$\dim\left(\Sec_{\frac{1}{2}(2\delta-d+s-2)}(X)\right)=2\delta-d+s-3<2 \delta-d+g-2= \dim(\mathbb P),$$where the last equality follows from \eqref{eq:m}  and $L \ncong N$. One can therefore take $s=2\delta-d$, so that the general  $\mathcal F_u$ arising from \eqref{exactB1} is of degree $d$, with $h^1(\mathcal F_u) = h^1(L) = 2$ and it is stable, 
since $s_1(\mathcal F_u) = 2 \delta - d = 4g-4 - 2\nu -d \geq 2$; the equality $s_1(\mathcal F_u) = 2 \delta - d$ follows from Theorem \ref{LN} and from \eqref{exactB1}.  This proves the stability of $\mathcal F_u$ together with (a) and (b).

Finally, to prove (c),  observe first that $K_C-A$ is very ample: indeed, if $K_C-A$ is not very ample, by the Riemann-Roch theorem 
there exists a $g^2_{\nu+2}$ on $C$; this is contrary to Lemma \ref{S2}, since the hypothesis $3 \le \nu \le \frac{g+8}{4}$ implies $g \ge 2\nu-2 + (2\nu - 6) \ge 2 \nu - 2$.  At the same time, since $\deg(N)=d-2g+2+\nu \geq g+4$ by \eqref{eq:ourbounds}, a general $N$ is also very ample. Thus any $\mathcal F_u$ as in \eqref{exactB1} is very ample too. 
\end{proof}

We now want to show that vector bundles constructed in Lemma \ref{lem:i=2.2} fill up the component $B_{\rm sup}$, as $N$ varies in $\pic^{d-2g+2+\nu}(C)$. To do this, we need to consider a parameter space of rank 2 vector bundles on $C$, arising as extensions of $K_C-A$ by $N$, as $N$ varies. If $\mathcal N\to\pic^{d-2g+2+\nu}(C)\times C$ is a Poincar$\acute{e}$ line bundle, we have the following diagram:
\begin{center}
  \begin{picture}(300,100)
    \put(58,13){$\pic^{d-2g+2+\nu}(C)$}
    \put(200,15){$C$}
    \put(115,55){$\pic^{d-2g+2+\nu}(C)\times C$}
    \put(145,50){\vector(-3,-2){40}}
    \put(160,50){\vector(3,-2){40}}
    \put(150,90){$\mathcal N$}
     \put(153,85){\vector(0,-1){15}}
    \put(112,35){$p_1$}
    \put(187,35){$p_2$}
      \put(240,50){$K_C-A$}
        \put(245,45){\vector(-3,-2){30}}
  \end{picture}
\end{center}Set $\mathcal E_{d,\nu}:={R^1p_1}_*(\mathcal N\otimes p_2^*(A-K_C))$. 
By \cite[pp.\;166-167]{ACGH}, $\mathcal E_{d,\nu}$ is a vector bundle on a suitable open, dense subset $S \subseteq \pic^{d-2g+2+\nu}(C)$ of 
rank $\dim{\ext^1(K_C-A,N)}= 5g-5- 2 \nu -d $ as in \eqref{eq:m}, since $K_C-A \ncong N$.  Consider the projective bundle $\mathbb P(\mathcal E_{d,\nu})\to S$, which is the family of 
$\mathbb P\left(\ext^1(K_C-A,N)\right)$'s as  $N$ varies in $S$. One has$$ \dim \mathbb P(\mathcal E_{d,\nu}) 
=g+ (5g-5- 2 \nu -d) -1=6g-6-2 \nu  -d. $$
 Consider the natural (rational) map
 \begin{eqnarray*}
 &\mathbb P(\mathcal E_{d,\nu})\stackrel{\pi_{d,\nu}}{\dashrightarrow} &U_C(2,d) \\
 &(N, u)\to &\mathcal F_u;
 \end{eqnarray*}  from Lemma \ref{lem:i=2.2} we know that  $ \im (\pi_{d,\nu})\subseteq B^{k_2}_d \cap  U^s_C(2,d)$. 
 
\begin{proposition}\label{thm:i=2.3} The closure $B_{\rm sup}$ of \;$\im (\pi_{d,\nu})$ in $U_C(2,d)$ is a generically smooth component of $B^{k_2}_d$,  having dimension $6g-6-2\nu -d$. In particular $B_{\rm sup}$ is {\em superabundant}. 
\end{proposition}
\begin{proof} 
The result will follow once we prove that $$\dim T_{\mathcal F}(B^{k_2}_d)=\dim B_{\rm sup},$$ for a general $\mathcal F$ in $\im(\pi_{d,\nu})$. 
First we claim  that $\dim B_{\rm sup} =6g-6-2 \nu  -d$. Indeed, let $\Gamma\subset F = \mathbb{P}(\mathcal F_u)$ be the
 section corresponding to  the quotient $\mathcal F_u \to\!\!\!\!\to K_C-A$. Its normal bundle is $N_{\Gamma/F}\simeq K_C-A-N$ (cf.\;\cite[Sect.\;V, Prop.\;2.9]{H});  since $N$ is general of degree at least $g+4$ by \eqref{eq:ourbounds},  we have $h^0(K_C-A-N)=0$; in other words $\Gamma$ is an algebraically isolated section of $F$. This guarantees that  $\pi_{d,\nu}$ is generically finite (for more details see the proof of \cite[Lemma\;6.2]{CF} and apply the same arguments). Hence we get $\dim \im(\pi_{d,\nu}) =6g-6-2 \nu  -d$. 

Now we prove that $\dim T_{\mathcal F}(B^{k_2}_d) =6g-6-2 \nu  -d$.
To show this, consider the Petri map of a general $\mathcal F\in\im(\pi_{d,\nu})$:
$$ \mu_\mathcal F: H^0(\mathcal F)\otimes H^0(\omega_C\otimes \mathcal F^*)\to H^0(\omega_C\otimes \mathcal F\otimes \mathcal F^*). $$By \eqref{exactB1} and  $h^1(N)=0$, we have
$$H^0(\mathcal F)\simeq H^0(N)\oplus H^0(K_C-A) \;\;\; {\rm and} \;\;\; 
H^0(\omega_C\otimes \mathcal F^*)\simeq H^0(A).$$Thus $\mu_\mathcal F$ reads as 
\begin{equation*}
\begin{CD}
\left(H^0(N)\oplus H^0(K_C-A)\right)&\;\otimes\;& \;H^0(A)&\;\; \stackrel
{\mu_\mathcal F }{\longrightarrow}\;\; &H^0(\omega_C\otimes \mathcal F\otimes \mathcal F^*).\\
\end{CD}
 \end{equation*}  Consider the following natural multiplication maps:

 \begin{eqnarray}
 \mu_{A,N}:& H^0(N) \otimes  H^0(A)\to H^0(N+A)\label{muA}\\
 \mu_{0,A}: &H^0(K_C-A) \otimes H^0(A) \to H^0(K_C)\label{mu0}. 
 \end{eqnarray}
\color{black}
\begin{claim}\label{cl:ker}  $\ker(\mu_\mathcal F)\simeq \ker(\mu_{0,A})\oplus \ker(\mu_{A,N}) $. 
\end{claim}
\begin{proof}[Proof of Claim \ref{cl:ker}] Consider the exact diagram:
  
\begin{equation*}\label{eq1b}
\begin{array}{ccccccccccccccccccccccc}
&&0&&0&&0&&\\[1ex]
&&\downarrow &&\downarrow&&\downarrow&&\\[1ex]
0&\lra& N + A - K_C & \rightarrow & \mathcal F\otimes (A-K_C)& \rightarrow & \mathcal O_C &\rightarrow & 0 \\[1ex]
&&\downarrow && \downarrow && \downarrow& \\[1ex]
0&\lra & N \otimes  \mathcal F^*& \rightarrow & \mathcal F\otimes \mathcal F^* & \rightarrow &
\mathcal  (K_C-A) \otimes F^*&\rightarrow & 0 \\[1ex]
&&\downarrow &&\downarrow &&\downarrow  &&\\[1ex]
0&\lra & \mathcal O_{C}& \lra & \mathcal F\otimes N^{-1}&\lra& (K_C-A)\otimes N^{-1}&0& \\[1ex]
&&\downarrow &&\downarrow&&\downarrow&&\\[1ex]
&&0&&0&&0&&;
\end{array}
\end{equation*}
 \color{black}  which arises from  \eqref{exactB1} and its dual sequence $0\to A-K_C\to \mathcal F^*\simeq\mathcal F(A-K_C-N)\to N^{-1}\to 0$. If we tensor the column in the middle by  $\omega_C$, we get \color{black} $H^0(\mathcal F\otimes A)\hookrightarrow  H^0(\omega_C\otimes \mathcal F\otimes \mathcal F^*)$. 

Observe moreover that $ H^0(N+A)\oplus H^0(K_C)\simeq H^0(\mathcal F\otimes A)$, which follows from \eqref{exactB1} tensored by $A$ and the fact that $h^1(N+A)=0$. Therefore there is no intersection between $\im(\mu_{0,A})$ and $\im(\mu_{A,N})$ and the statement is proved. 
\end{proof}

\noindent 
By Claim \ref{cl:ker}, 
\begin{eqnarray*}
\dim T_{\mathcal F}(B^{k_2}_d)&=&4g-3-h^0(\mathcal F)h^1(\mathcal F)+\dim(\ker \mu_\mathcal F) \\
&=&4g-3-2(d-2g+4)+\dim(\ker(\mu_0(A)))+\dim(\ker(\mu_{A,N})).
\end{eqnarray*} From \eqref{muA} and \eqref{mu0}, we have
$$\ker(\mu_{0,A})\simeq H^0(K_C-2A)\cong H^1(2A)^*\;\; {\rm and} \;\;   \ker(\mu_{A,N})\simeq H^0(N-A),$$
as it follows from the base point free pencil trick. Under the numerical assumption $\nu \le \frac{g+8}{4}$,  from Theorem \ref{S1} we have
$h^0(2A)=3$,  which implies $h^1(2A)=g + 2 -2 \nu$. The inequality $\deg N \ge g+ 1 +\nu$ given by \eqref{eq:ourbounds} and the generality of $N$ show that $h^1(N-A)=0$, which yields $h^0(N-A)=d-3g+3$. So we have
\begin{eqnarray*}
\dim T_{\mathcal F}(B^{k_2}_d)&=&4g-3-2(d-2g+4)+(g + 2 -2 \nu) + (d-3g+3)\\
&=&6g-6-2 \nu -d = \dim B_{\rm sup}.
\end{eqnarray*} To complete the proof, it suffices to observe that  
$\rho_d^{k_2} = 8g-11-2d \le 5g-10-d < 6g-6-2\nu -d,
$ as it follows by \eqref{eq:ourbounds}. \end{proof}

%%%%%%%%%%%%%%%%%%%%%%%%%%%%%%%%%%%%%%%%%%%%%%%%%%%%%%
%%%%%%%%%%%%%%%%%%%%%%%%%%%%%%%%%%%%%%%%%%%%%%%%%%%%%%

 \subsection{The regular component $B_{\rm reg}$} \label{ss:regular}  In this subsection we construct the regular component $B_{\rm reg}$ as in Theorem \ref{thm:main2}. In what follows, we use \color{black} notation as in \eqref{degree1}, i.e. 
 $l = h^0(L),\;j =h^1(L),\;r= h^1(N)$ which will be considered all positive (cf. Theorem \ref{CF} for $L$).  \color{black} For any exact sequence $(u)$ as in \eqref{degree}, let 
 $\partial_u : H^0(L) \to H^1(N)$ be the corresponding coboundary map.  For any integer $t>0$, consider
\begin{equation}\label{W1}
 \mathcal W_t:=\{u\in\ext^1(L,N)\ |\ {\rm corank} (\partial_u)\ge t\}\subseteq \ext^1(L,N), 
\end{equation}
 which has a natural structure of determinantal scheme; its expected codimension is
  $t(l-r+t)$ \color{black} (cf. \cite[Sect.\,5.2]{CF}). In this set--up, one has: \color{black}

\begin{theorem}(\cite[Theorem 5.8 and Corollary 5.9]{CF})\label{CF5.8} Let $C$ be a smooth curve of genus $g\ge 3$.
 Let
$$ r= h^1 (N) \ge1,\ l =h^0 (L) \ge\max\{1,r-1\},\ m:=\dim(\ext^1(L,N))\ge l+1.$$
\color{black}
Then, we have:
\begin{enumerate}
\item[(i)]  $l-r+1\ge 0$; \color{black}
\item[(ii)] $\mathcal W_1$ is irreducible of (expected) dimension $m- (l-r+1)$\color{black}; 
\item[(iii)] if $l \geq r$, then $\mathcal W_1 \subset \ext^1(L,N)$. Moreover for general $u \in  \ext^1(L,N)$,  $\partial_u$ is surjective whereas for general $w \in  \mathcal W_1$,  ${\rm corank} (\partial_w)=1$. 
\end{enumerate}
\end{theorem}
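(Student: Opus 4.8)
The statement to prove is Theorem \ref{CF5.8}, which describes the determinantal locus $\mathcal W_t$ for $t=1$: it asserts that $c(l,r,1)=l-r+1\ge 0$, that $\mathcal W_1$ is irreducible of expected dimension $m-c(l,r,1)$, and that when $l\ge r$ the locus $\mathcal W_1$ is a proper subset of $\ext^1(L,N)$ with the generic corank behaviour as stated. Since this is quoted from \cite{CF} as an external input, I would present the argument by recalling the standard ``Brill--Noether theory for extensions'' picture: the coboundary map $\partial_u\colon H^0(L)\to H^1(N)$ depends linearly on $u\in\ext^1(L,N)=H^1(N\otimes L^{-1})$ via cup product, so the assignment $u\mapsto\partial_u$ is a linear map $\ext^1(L,N)\to\operatorname{Hom}(H^0(L),H^1(N))$, and $\mathcal W_t$ is the preimage of the generic determinantal variety of matrices of corank $\ge t$ inside $\operatorname{Hom}(H^0(L),H^1(N))$.

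\textbf{Step 1: the numerics.} First I would record that $\dim H^0(L)=l$, $\dim H^1(N)=r$, so the ambient space of homomorphisms has dimension $lr$, and the locus of maps of corank $\ge 1$ (equivalently, rank $\le \min(l,r)-1$) has codimension $|l-r|+1$ in that space when $l\ge r$, giving codimension $l-r+1$; this is exactly $c(l,r,1)$ and it is $\ge 0$ under the hypothesis $l\ge\max\{1,r-1\}$. This gives (i) immediately, and the hypothesis $m\ge l+1$ is what will make the expected dimension positive.

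\textbf{Step 2: irreducibility and dimension.} For (ii) the key point is that the linear map $u\mapsto\partial_u$ is \emph{surjective} onto $\operatorname{Hom}(H^0(L),H^1(N))$, or at least has image meeting the corank-$1$ locus transversally in the appropriate sense; this is where one invokes the base-point-free pencil trick / the surjectivity of a suitable multiplication or cup-product map (this is the genuinely curve-theoretic input, and is where the hypothesis on $g$ and on $L,N$ enters). Granting that, $\mathcal W_1$ is the preimage under a linear map of an irreducible variety (the generic corank-$\ge1$ determinantal variety is irreducible, even Cohen--Macaulay), hence irreducible of the expected codimension $c(l,r,1)$ inside $\ext^1(L,N)$, giving $\dim\mathcal W_1=m-c(l,r,1)$. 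The claim (iii) that $\mathcal W_1\subsetneq\ext^1(L,N)$ for $l\ge r$ follows because then $c(l,r,1)=l-r+1\ge 1>0$, so $\mathcal W_1$ is a proper closed subset; a general $u$ lies outside it, i.e.\ $\partial_u$ is surjective, while for general $w\in\mathcal W_1$ the corank is exactly $1$ since $\mathcal W_2\subset\mathcal W_1$ has strictly smaller dimension by the same codimension count ($c(l,r,2)=2(l-r+2)>c(l,r,1)$).

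\textbf{Main obstacle.} The delicate point is Step 2: one must verify that the cup-product map $\ext^1(L,N)\to\operatorname{Hom}(H^0(L),H^1(N))$ is surjective (or generic-rank-maximal), since only then does ``preimage of the determinantal variety'' inherit irreducibility and the expected dimension rather than jumping. Equivalently, by Serre duality this is the surjectivity of the multiplication map $H^0(L)\otimes H^0(K_C\otimes N\otimes L^{-1})\to H^0(K_C\otimes N)$ dualized appropriately, which is controlled by the base-point-free pencil trick together with vanishing statements for $L$ and $N$; I would isolate this as a lemma, prove it under the stated hypotheses, and then the rest of Theorem \ref{CF5.8} is formal commutative algebra of generic determinantal loci. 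Since the full details are carried out in \cite{CF}, in the paper I would give the above reduction and cite \cite[Theorem 5.8 and Corollary 5.9]{CF} for the curve-theoretic heart of the argument.
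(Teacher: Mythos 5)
This statement is quoted from \cite[Theorem 5.8 and Corollary 5.9]{CF} and the paper gives no proof of it, so the comparison is with the argument in \cite{CF} that the paper relies on. Your plan has a genuine gap at its core step. You reduce everything to the claim that the linear map $\Phi:\ext^1(L,N)\to\operatorname{Hom}(H^0(L),H^1(N))$, $u\mapsto\partial_u$, is surjective (or at least meets the corank locus transversally), and you propose to prove this by the base-point-free pencil trick ``under the stated hypotheses''. But the stated hypotheses do not give this, and in fact it fails in cases the theorem must cover: surjectivity of $\Phi$ is dual to injectivity of the multiplication map $H^0(L)\otimes H^0(K_C\otimes N^{-1})\to H^0(K_C\otimes L\otimes N^{-1})$, whose target has dimension $m$ while the source has dimension $lr$; whenever $lr>m$ (which happens, e.g., in the applications of this theorem in Proposition \ref{lem:i=1.1} with $r\ge 2$, where $l\approx\delta-g+2$ and $m=2\delta-d+g-1$) the map cannot be injective, so $\Phi$ is not surjective --- indeed the pencil trick computes a typically nonzero kernel rather than proving injectivity. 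Without surjectivity, the preimage under a linear map of the generic corank-$\ge 1$ determinantal variety need not be irreducible nor of the expected codimension, so both (ii) and your argument for the generic corank in (iii) (via $c(l,r,2)>c(l,r,1)$) collapse; only the local lower bound $\dim_u\mathcal W_1\ge m-c(l,r,1)$ survives from the determinantal structure.

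The actual argument in \cite{CF} avoids this entirely: one forms the incidence variety $I=\{(\phi,u)\in\mathbb{P}\left(H^0(K_C-N)\right)\times\ext^1(L,N)\;:\;\phi\circ\partial_u=0\}$, using $H^1(N)^{\vee}\cong H^0(K_C-N)$. For each nonzero $\phi$, multiplication by $\phi$ maps $H^0(L)$ injectively into $H^0(K_C+L-N)$, so the fiber of $I$ over $\phi$ is a linear subspace of $\ext^1(L,N)$ of constant dimension $m-l$; hence $I$ is irreducible of dimension $m-l+r-1$. Since $\mathcal W_1$ is the image of $I$ under the second projection (this is the ``$pr_1^{-1}(\sigma)$'' structure the paper invokes in the proof of Lemma \ref{lem:i=1.2}), it is irreducible of dimension at most $m-c(l,r,1)$, and equality, properness in $\ext^1(L,N)$ for $l\ge r$, and the generic corank statements follow by combining the determinantal lower bound with an analysis of the fibers of the second projection. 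If you want to include a proof sketch rather than a bare citation, it should follow this incidence-scroll construction; the reduction to a surjectivity lemma for the cup product is not salvageable under the hypotheses as stated.
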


To construct $B_{\rm reg}$, observe firts that by \eqref{eq:ourbounds}  $W^0_{4g-5-d}$ is not empty, irreducible and $h^0(D) =1$, for  general 
$D \in W^0_{4g-5-d}$. We will prove the following preliminary result. 

\begin{lemma}\label{lem:i=1.2} Let $D \in W^0_{4g-5-d}$ and  $p\in C$ be general and let $\mathcal W_1 \subseteq \ext^1(K_C-p,K_C-D)$ be as in \eqref{W1}.  Then, for $u\in \mathcal W_1 $  general, \color{black}
the corresponding rank $2$ vector bundle $\mathcal F_u$ is stable, \color{black} with:
\begin{enumerate}
\item[(a)] $h^1(\mathcal F_u)=2$;
\item[(b)] $s_1(\mathcal F) \ge 1$ (resp., $2$)  if $d$ is odd (resp., even);  
\item[(c)] $\mathcal F_u$ is very ample when $\nu \geq 4$. 
 \end{enumerate} 
\end{lemma}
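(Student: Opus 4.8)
The plan is to study extensions of $L := K_C - p$ by $N := K_C - D$ via the determinantal locus $\mathcal W_1$ of \eqref{W1}, exploiting Theorem \ref{CF5.8}. First I would record the numerical data: $\delta = \deg L = 2g-3$, $\deg N = 2g-2 - (4g-5-d) = d - 2g + 3$, so that $l = h^0(K_C - p) = g-1$, $j = h^1(K_C-p) = 1$, while $r = h^1(K_C - D) = h^0(D) = 1$ and $n = h^0(K_C - D) = g - 1 - (4g-5-d) = d - 3g + 4$ by Riemann-Roch and the generality of $D \in W^0_{4g-5-d}$ (note $4g-5-d \le g-2$ from \eqref{eq:ourbounds}, so $D$ imposes independent conditions). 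From \eqref{eq:m}, $m = \dim \ext^1(L,N) = 2\delta - d + g - 1 = 3g - 4 - d + \dots$; one checks $L \ncong N$ (as $p \ne D$ generically) and that $m \ge l+1$ and $l \ge r$ hold in the range \eqref{eq:ourbounds}, so Theorem \ref{CF5.8} applies: $\mathcal W_1 \subset \ext^1(L,N)$ is irreducible of dimension $m - c(l,r,1) = m - (l - r + 1)$, and since $l = r + (g-2)$... — the key point is that $c(l,r,1) = l - r + 1$ and a direct substitution gives $\dim \mathcal W_1 = 4g - 6 - d$. That $\mathcal W_1$ is actually a \emph{linear} subspace follows because $\partial_u$ being drop-rank-$\ge 1$ with $r = 1$ just means $\partial_u$ is not surjective, i.e. $\partial_u$ lands in a hyperplane — but $H^1(N)$ is $1$-dimensional, so $\mathcal W_1 = \ker\big(\ext^1(L,N) \to \Hom(H^0(L), H^1(N))\big)$ composed with evaluation is cut out by the single linear condition "$\partial_u = 0$", hence is the kernel of a linear map; I would make this precise by identifying $\partial_u = 0$ with $u$ coming from $\ext^1(L,N) \to \ext^1(L,N)$ lifting to $H^1$ of the appropriate twist, i.e. with the image of $H^1(N) \otimes (\text{something})$, giving linearity.

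For part (a): from the sequence $(u): 0 \to K_C - D \to \mathcal F_u \to K_C - p \to 0$, since $u \in \mathcal W_1$ forces $\partial_u = 0$ (corank exactly $1 = r$ by Theorem \ref{CF5.8}(iii) for general $w$), we get $h^0(\mathcal F_u) = h^0(K_C-D) + h^0(K_C - p) = (d - 3g + 4) + (g-1) = d - 2g + 3$, and dually $h^1(\mathcal F_u) = \chi$-count $= h^0(\mathcal F_u) - \deg \mathcal F_u + 2(g-1) = (d-2g+3) - d + 2g - 2 = 1$?? — here I need to recompute: actually $h^1(\mathcal F_u) = h^1(K_C-D) + h^1(K_C-p) - (\text{correction from } \partial_u)$; with $\partial_u = 0$ one has $h^1(\mathcal F_u) = h^1(K_C-D) + h^1(K_C - p) = 1 + 1 = 2$, which is (a). For part (b), the stability/Segre bound: I would tensor down to $\mathcal G_e := \mathcal F_u \otimes (D - K_C)$, fitting in $0 \to \mathcal O_C \to \mathcal G_e \to (p - D)\text{-twist} \to 0$ of degree $2\delta - d = 2(2g-3) - d = 4g - 6 - d \ge 1$ by \eqref{eq:ourbounds}. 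When $2\delta - d \ge 2$ (i.e. $d \le 4g - 8$), apply Theorem \ref{LN}: $s_1(\mathcal F_u) \ge s$ iff $e \notin \Sec_{\frac12(2\delta-d+s-2)}(X)$, and a dimension count — $\dim \Sec_h(X) = 2h - 1$ versus $\dim \mathbb P = 2\delta - d + g - 2$, together with the restriction imposed by $u$ lying in the proper linear subspace $\mathcal W_1$ of $\mathbb P$ — shows that a general $u \in \mathcal W_1$ avoids the relevant secant variety for $s = 2\delta - d$ if $d$ is even and $s = 2\delta - d - 1$ if $d$ is odd (parity of $s$ must match $2\delta - d = 4g-6-d$, which has the opposite parity of $d$); the boundary case $2\delta - d = 1$ (i.e. $d = 4g-7$, forcing $d$ odd) is handled as in Lemma \ref{lem:i=2.2}(b) by the diagram argument ruling out a destabilizing sub-line bundle. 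This gives (b). For (c), very-ampleness: $K_C - p$ is very ample iff $C$ has no $g^2_{\nu+1}$, and $K_C - D$ is very ample iff $C$ has no $g^2_{4g-3-d}$ through... — more simply, I would argue $K_C - p$ is very ample because a failure produces a $g^{g-2}_{2g-4}$ with an extra section, i.e. via Riemann-Roch a low-degree pencil incompatible with Lemma \ref{S2} for $\nu \ge 4$ and $d \ge 3g-1$ (the hypotheses control $\deg D = 4g-5-d \le g-4$), and similarly for $K_C - D$; then an extension of very ample bundles with the section locus analysis gives very-ampleness of $\mathcal F_u$, exactly as in the last paragraph of the proof of Lemma \ref{lem:i=2.2}.

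The main obstacle I anticipate is part (b) in the secant-variety dimension count, specifically making sure that the constraint "$u \in \mathcal W_1$" — a proper linear subspace of $\mathbb P(\ext^1(L,N))$ of codimension $l - r + 1 = g - 1$ — genuinely forces a general such $u$ off the secant variety $\Sec_h(X)$ with $h = \frac12(2\delta - d + s - 2)$: one needs $\dim \mathcal W_1 - 1 > \dim(\Sec_h(X) \cap \mathbb P(\mathcal W_1))$, and since $\Sec_h(X)$ is not a linear space there is no automatic transversality, so I would instead bound $\dim \Sec_h(X) < \dim \mathbb P - (g-1) = \dim \mathbb P(\mathcal W_1)$ outright, i.e. show $2h - 1 < \dim \mathcal W_1 - 1 = 4g - 7 - d$, which with $h = \frac12(4g-6-d + s - 2)$ and $s$ chosen maximal subject to parity reduces to a clean inequality in $d, g, \nu$ that should follow from \eqref{eq:ourbounds} — this is the computation I expect to require the most care, together with checking it degrades gracefully to give only $s_1 \ge 1$ (not $2$) in the odd case. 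A secondary subtlety is confirming $\mathcal W_1$ is linear rather than merely determinantal; this is special to $r = h^1(N) = 1$ and I would isolate it as the first thing to nail down, since the rest of the argument (and the statement "$\mathcal W_1$ is a sub-vector space") depends on it.
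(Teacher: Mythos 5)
Your overall strategy matches the paper's (linearity of $\mathcal W_1$ from ``$r=1$, so corank $\geq 1$ means $\partial_u=0$'', dimension count giving $4g-6-d$, Theorem \ref{LN} plus a secant--variety dimension count inside $\widehat{\mathcal W}_1=\mathbb{P}(\mathcal W_1)$ for stability, and very--ampleness of the two line bundles plus the extension argument of Lemma \ref{lem:i=2.2}), but part (b) has a genuine gap in the case $d$ even. First, your parity bookkeeping is reversed: $2\delta-d=4g-6-d$ has the \emph{same} parity as $d$, and your interim claim that a general $u\in\mathcal W_1$ avoids the secant variety for $s=2\delta-d$ (resp.\ $s=2\delta-d-1$) is false: for such $s$ one has $\dim \Sec_{\frac{1}{2}(2\delta-d+s-2)}(X)=2(2\delta-d)-3=8g-15-2d\geq 4g-7-d=\dim\widehat{\mathcal W}_1$ throughout the range \eqref{eq:ourbounds}, so no dimension count can give those values of $s$; the whole point is that inside the linear space $\widehat{\mathcal W}_1$ one can only reach $s\leq 2$. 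Your fallback (requiring the strict inequality $\dim\Sec_h(X)<\dim\widehat{\mathcal W}_1$) forces $s\leq 1$, which by parity yields only $s=1$ for $d$ odd and nothing beyond semistability ($s=0$) for $d$ even. The missing idea is exactly how the paper handles the equality case $s=2$: then $\dim\Sec_{\frac{1}{2}(4g-6-d)}(X)=4g-7-d=\dim\widehat{\mathcal W}_1$, and since $\Sec_h(X)$ is irreducible and non-degenerate (because $X\subset\mathbb{P}$ is non-degenerate) while $\widehat{\mathcal W}_1$ is a proper linear subspace, $\widehat{\mathcal W}_1\not\subseteq \Sec_h(X)$, so a general $u\in\widehat{\mathcal W}_1$ still avoids it and $s_1(\mathcal F_u)\geq 2$. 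Without this (or some substitute) stability for $d$ even is not proved. (Also, the ``boundary case $2\delta-d=1$'' you propose to treat separately does not occur: \eqref{eq:ourbounds} gives $2\delta-d\geq 2\nu-1\geq 5$.)

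In part (c) the very--ampleness of $K_C-D$ is not ``similar'' to that of $K_C-p$ and cannot be waved through: $K_C-p$ is very ample simply because a general $\nu$-gonal curve with $\nu\geq 4$ has no $g^1_3$, but $K_C-D$ fails to be very ample exactly when $h^0(D+p+q)\geq 2$ for some $p,q$, and divisors of degree $4g-3-d$ moving in a pencil may well exist on $C$; what must be shown is that a \emph{general} $D\in W^0_{4g-5-d}$ is not of the form (pencil member)$-p-q$. The paper does this with the incidence variety $\Xi_r\subset W^0_{4g-5-d}\times W^0_2$ and the Martens--Mumford bound $\dim W^r_{4g-3-d}\leq 4g-5-d-2r$, which is precisely where the hypothesis $d\geq 3g-1$ (ensuring $4g-3-d\leq g-2$) and $\nu\geq 4$ enter; your proposal omits this argument. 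Two smaller points: $h^0(K_C-D)=d-3g+5$, not $d-3g+4$ (harmless for (a), since there you correctly use $h^1(\mathcal F_u)=h^1(K_C-D)+h^1(K_C-p)=2$ when $\partial_u=0$), and $\mathcal W_1$ is cut out by $g-1$ linear conditions (the vanishing of $\partial_u\in{\rm Hom}(H^0(K_C-p),H^1(K_C-D))$), not a single one, though your identification of $\mathcal W_1$ as the kernel of the linear map $u\mapsto\partial_u$ is the right way to get linearity, and the dimension via Theorem \ref{CF5.8} is legitimate.
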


\begin{proof} From the assumptions we have: 
\begin{equation}\label{degree0}
\begin{CD}
&(u)& : 0\to &K_C-D&\to &\ \ \mathcal F\ \ & \to  \ &\ \ K_C-p\ \ &\to 0\\
&\deg&       &d-2g+3&& d&& 2g-3&\\
&h^0&       &d-3g+5&&  && g-1&\\
&h^1&       &1&&  && 1&
\end{CD}
\end{equation} By \eqref{eq:ourbounds}  $\deg D = 4g-d-5 \ge 2\nu + 1$, therefore 
$K_C-D \ncong K_C-p$; thus, using  \eqref{eq:m}  and notation as in Theorem \ref{CF5.8}, one has$$l=g-1, \; r=1 \;\; {\rm and} \;\; m=\dim \ext^1(K_C-p,K_C-D)=5g-7-d.$$By \eqref{eq:ourbounds}, one has $d \le 4g-7$ so  $m\ge l+1=g$. Hence we can apply Theorem \ref{CF5.8} to 
$$\mathcal W_1=\{u\in\ext^1(K_C-p, K_C-D)\ |\ \corank(\partial_u)\ge 1\},$$which therefore is irreducible, of  (expected) dimension $ \dim \mathcal W_1 = m- 1(l-r+1)=4g-6-d$.  Moreover, by Theorem \ref{CF5.8} (iii) and formula \eqref{degree0}, for general $u \in \mathcal W_1$ one has $h^1(\mathcal F_u) =2$, which proves (a).  

We now want to show that  $\mathcal F_u$ satisfies also (b), for $u \in \mathcal W_1$ general; in particular it is stable. To do this, set $\mathbb{P}:= \mathbb{P}\left(\ext^1(K_C-p, K_C-D)\right)$ and 
consider the projective scheme $\widehat{\mathcal W}_1 := \mathbb{P} (\mathcal W_1) \subset\mathbb P$, which has therefore dimension $4g-7-d$. Posing $\delta := 2g-3$ and considering \eqref{eq:ourbounds}, 
one has $2\delta - d \geq 2 \nu \geq 6$. We are therefore in position to apply  Theorem \ref{LN}. 
 We consider the natural morphism
$ C\stackrel{\varphi}{\longrightarrow}\mathbb P$, given by the complete linear system $|K_C+D-p|$. Set $X = \varphi\left(C\right)$, as in the proof of Lemma \ref{lem:i=2.2}. Let $s$  be an integer such that  $ s\equiv 2\delta-d\  {\rm{ (mod \;2) \ \ and  }} $ $ 0 \le s\le 2\delta - d$. \color{black}
Then we have $$\dim \Sec_{\frac{1}{2}(2\delta-d+s-2)}(X) =2\delta - d + s - 3 = 4g - 9 - d + s \le 4g-7-d = \dim \widehat{\mathcal W}_1$$if and only if 
$s\le 2$, where the equality holds if and only if $s=2$. 

Therefore, for $d$ odd,  by Theorem \ref{LN} one has $s_1(\mathcal F_u) \geq 1$ for $u \in \mathcal W_1$ general; in particular $\mathcal F_u$ is stable and (b) is proved in this case. 

For $d$ even, if one dualizes the exact sequence \eqref{exactB0} and tensors via $\omega_C$, one gets 
$$(e): \;\; 0 \to p \to \mathcal E_e:= \mathcal F_u^* \otimes \omega_C  \to D \to 0,$$where $(e)$ defines the same point as \color{black}  $(u)$ in the projective space $\mathbb P$; in particular $s_1 (\mathcal F_u) = s_1 (\mathcal E_e)$ (cf. Sect.\,\ref{ss:seg}) and $h^0(\mathcal E_e) = 2$, by Serre duality and the fact that $(u) \in \widehat{\mathcal W}_1$. Following the same strategy as in the first part of the proof of \cite[Theorem]{Teixidor1}, one deduces that $(e)$ belongs to the linear span  $\langle \varphi(D) \rangle \subset \mathbb P$. On the other hand, any point $x \in \langle \varphi(D) \rangle $ gives rise to an extension:
$$(x):\;\; 0 \to p \to \mathcal E_x \to D \to 0$$which belongs to $\widehat{\mathcal W}_1$, since $h^0(\mathcal E_x) = 2$ (cf.\;diagram (2) and the subsequent details in the proof of \cite[Theorem]{Teixidor1}). Thus $\langle \varphi(D) \rangle \subseteq \widehat{\mathcal W}_1$. By the Riemann-Roch theorem, 
$$\dim \langle \varphi(D) \rangle = h^0(K_C+D-p) - h^0(K_C-p) -1 = 4g-7-d = \dim \widehat{\mathcal W}_1.$$Since they are both closed and irreducible, one gets $\widehat{\mathcal W}_1= \langle \varphi(D) \rangle$. On the other hand  
$$\Sec_{\frac{1}{2}(2\delta-d+2-2)}(X) = \Sec_{\frac{1}{2}(4g-6-d)}(X),$$which is of dimension $4g-7-d$ too, is non-degenerate in $\mathbb P$  as $X \subset \mathbb P$ is not. Thus, we conclude that 
$\widehat{\mathcal W}_1 \neq  \Sec_{\frac{1}{2}(4g-6-d)}(X)$. In particular, from Theorem \ref{LN},  for a general $u \in \widehat{\mathcal W}_1$ one has $s_1(\mathcal F_u ) \geq 2$, so $\mathcal F_u$ is stable and (b) is proved also in this case.

To prove (c) observe first that, since $\nu \geq 4$ by assumption, then $K_C-p$ is very ample as it follows by the Riemann-Roch theorem. Now: 

\begin{claim}\label{cl:vaK-D}  For general $D \in W^0_{4g-5-d}$, $K_C-D$ is very ample if $\nu \ge 4$. 
\end{claim}
\begin{proof} [Proof of Claim \ref{cl:vaK-D}] Assume by contradiction that $K_C-D$ is not very ample for  general $D\in W^0_{4g-5-d}$.  For a non-negative integer $\tau$, define the following:
$$\Xi _\tau  := \{ (D, p+q) \in W^0_{4g-5-d} \times W^0_2 ~|~  h^0 (D+p+q) =\tau+1 \}. $$If $\Xi_\tau \neq \emptyset$,
\color{black} then we have the diagram:
\begin{center} 
   \begin{picture}(300,60)
    \put(95,10){$W^0_{4g-5-d}$}
    \put(215,10){$W^\tau_{4g-3-d}$}
    \put(170,55){$\Xi _\tau$}
    \put(165,50){\vector(-3,-2){40}}
    \put(180,50){\vector(3,-2){40}}
        \put(127,40){$\pi_\tau$}
    \put(210,40){${\wp}_\tau$}
  \end{picture}
\end{center}
which is given by $\pi _\tau (D, p+q) :=D$ and $\wp _\tau (D, p+q) := D+p+q$.
The assumption implies that, for some $\tau \in\{1,2\}$, the image of $ \pi _\tau$ is dense in  $W^0_{4g-5-d}$. Considering the map $\wp _\tau$, we get $\mbox{dim} \Xi _\tau \leq \mbox{dim}W^\tau_{4g-3-d}+\tau$. By Martens' and Mumford's Theorems (cf. \cite[Thm. (5.1), (5.2)]{ACGH}), we have $\mbox{dim}W^\tau_{4g-3-d}\leq 4g-5-d-2\tau$, since $C$ is  a general $\nu$-gonal curve with $ \nu\geq 4$ and $4g-3-d\leq g-2$ by \eqref{eq:ourbounds}. In sum,  it turns out that  
$$\mbox{dim}W^0_{4g-5-d} \leq \mbox{dim} \Xi _r\leq 4g-5-d-\tau, $$
which cannot occur. This completes the  proof of the claim.\end{proof} 

\noindent
The above arguments prove (c) and complete the proof of the Lemma.  \end{proof}

To construct the component $B_{\rm reg}$ notice that, as in Sect.\;\ref{ss:superabundant}, one has a projective bundle $\mathbb P(\mathcal E_{d})\to S$ where $S \subseteq W^0_{4g-5-d}\times C$ is a suitable open dense subset: $\mathbb P(\mathcal E_{d})$ is the family of 
$\mathbb P(\ext^1(K_C-p,K_C-D))$'s  as  $(D, p) \in S$ varies. Since, for any such $(D, p) \in S$, 
$\widehat{\mathcal W}_1$ is irreducible of constant dimension $4g-7-d$, one has an irreducible subscheme 
$\widehat{\mathcal W}_1^{Tot} \subset \mathbb P(\mathcal E_{d})$ which has therefore dimension 
$$\dim \widehat{\mathcal W}_1^{Tot} = \dim S + 4g - 7 - d = 4g - d - 4 + 4g - 7 - d = 8 g - 2d - 11 = \rho_d^{k_2}.$$
From Lemma \ref{lem:i=1.2}, one has the natural (rational) map
 \[
 \begin{array}{ccc}
 \widehat{\mathcal W}_1^{Tot}& \stackrel{\pi}{\dashrightarrow} & U_C(d) \\
 (D,p, u) & \longrightarrow &\mathcal F_u;
 \end{array}
 \] and  $\im(\pi) \subset B^{k_2}_d \cap U_C^s(2,d)$.

\begin{proposition}\label{thm:i=1.3} The closure $B_{\rm reg}$ of $\im(\pi)$ in $U_C(2,d)$ is a generically smooth component of $B^{k_2}_d$ with dimension $\rho_d^{k_2} = 8g-11-2d$, i.e. $B_{\rm reg}$ is {\em regular}.  
\end{proposition}

\begin{proof} From  the fact that $\im(\pi)$ contains stable bundles, any component of $B^{k_2}_d$ containing it has dimension at least $\rho_d^{k_2}$. We concentrate in computing $\dim T_{\mathcal F}(B^{k_2}_d)$, for  general $\mathcal F \in \im(\pi)$. Consider the Petri map 
$$\mu_\mathcal F: H^0(\mathcal F)\otimes H^0(\omega_C\otimes \mathcal F^*)\to H^0(\omega_C\otimes \mathcal F\otimes \mathcal F^*)$$for a general $\mathcal F\in \im(\pi)$. From diagram \eqref{degree0} and the fact that $\mathcal F = 
\mathcal F_u$, for some $u$ in some fiber $\widehat{\mathcal W}_1$ of $\widehat{\mathcal W}_1^{Tot}$, one has 
that the corresponding coboundary map $\partial_u$ is the zero-map; in other words 
$$H^0(\mathcal F) \cong H^0(K_C-D) \oplus H^0(K_C-p) \;\;\;{\rm and} \;\;\; H^1(\mathcal F) \cong  H^1(K_C-D) \oplus H^1(K_C-p).$$This means that, for any such bundle, the domain of the Petri map $\mu_{\mathcal F}$ coincides with that 
of $\mu_{{\mathcal F}_0}$, where  
$\mathcal F_0 := (K_C-D) \oplus (K_C-p)$ corresponds to the zero vector in $\mathcal W_1 \subset \ext^1(K_C-p, K_C-D)$. We will concentrate on $\mu_{{\mathcal F}_0}$; observe that 
\[\begin{array}{ccl}
H^0(\mathcal F_0) \otimes H^0(\omega_C \otimes \mathcal F_0^*) & \cong &  \left(H^0(K_C-D) \otimes H^0(D)\right)  \oplus \left(H^0(K_C-D) \otimes H^0(p)\right) \oplus\\
& & \left(H^0(K_C-p) \otimes H^0(D)\right) \oplus \left(H^0(K_C-p) \otimes H^0(p)\right). 
\end{array}
\]Moreover 
$$\omega_C \otimes \mathcal F_0 \otimes \mathcal F_0^* \cong K_C \oplus (K_C+p-D) \oplus (K_C+D-p) \oplus K_C.$$Therefore, for Chern classes reason,  
$$\mu_{\mathcal F_0} = \mu_{0, D} \oplus \mu_{K_C-D, p} \oplus \mu_{K_C-p, D} \oplus \mu_{0, p}$$where the maps 
\begin{eqnarray*}
\mu_{0,D}: & H^0(D)\otimes H^0(K_C-D)\to H^0(K_C),\\
 \mu_{K_C-D,p}:& H^0(K_C-D)\otimes H^0(p)\to H^0(K_C-D+p)\\
 \mu_{K_C-p,D}: & H^0(K_C-p)\otimes H^0(D)\to H^0(K_C+D-p)\\
  \mu_{0,p}: & H^0(p)\otimes H^0(K_C-p)\to H^0(K_C)
 \end{eqnarray*}are natural multiplication maps. Since $h^0(D) = h^0(p) =1$, the maps $\mu_{0,D}, \; \mu_{K_C-D,p},\; \mu_{K_C-p,D}\;\mu_{0,p}$ are all injective and so is $\mu_{\mathcal F_0}$. \color{black}  By semicontinuity on $\mathcal W_1$, one has that $\mu_{\mathcal F}$ is injective, for $\mathcal F$ general in $\widehat{\mathcal W}_1$. 
 
 The previous argument shows that a general $\mathcal F \in \im(\pi)$  is contained in 
only one irreducible component, say $B_{\rm reg}$, of $B_d^{k_2}$ for which 
\begin{eqnarray*}
\dim B_{\rm reg}= \dim T_{\mathcal F}(B_{\rm reg})&=&4g-3-h^0(\mathcal F)h^1(\mathcal F) \\
&=&4g-3-2(d-2g+4) = 8g-11-2d, 
\end{eqnarray*} i.e.  $B_{\rm reg}$ is generically smooth and of dimension $\rho_d^{k_2}$. \color{black}

To conclude that $B_{\rm reg}$ is the closure of $\im(\pi)$, it suffices to show that the rational map $\pi$ is generically finite onto its image. To do this, let $F = \mathbb{P} (\mathcal F_u)$ be the ruled surface, for general $\mathcal F_u 
\in \widehat{\mathcal W}_1^{Tot} $, and let $\Gamma$ be the section corresponding to the 
quotient $\mathcal F_u \to \!\!\!\! \to K_C-p$. Then its normal bundle is $N_{\Gamma/F} \simeq D-p$ which has no sections. Thus, one deduces the generically finiteness of $\pi$ by reasoning as
in the proof of Proposition \ref{thm:i=2.3}.  \end{proof}

%%%%%%%%%%%%%%%%%%%%%%%%%%%%%%%%%%%%%%%%%%%%%%%%%%%%%%
%%%%%%%%%%%%%%%%%%%%%%%%%%%%%%%%%%%%%%%%%%%%%%%%%%%%%%
\subsection{No other reduced components of dimension at least $\rho_d^{k_2}$}\label{ss:noother}  In this section, we will show that no other  reduced components of $B_d^{k_2} $, having dimension at least $\rho_d^{k_2} = 8g - 11 - 2d$, \color{black} exist except for $B_{\rm reg}$ and $B_{\rm sup}$ constructed in the previous sections. 

Let $B \subset B^{k_2}_d$ be any reduced component with $\dim B \ge \rho_d^{k_2} = 8g-11-2d$; 
from Theorem \ref{CF}, $\mathcal F\in B$ general fits in an exact sequence of the form
\begin{equation}\label{exact_seq}
 0\to N\to \mathcal F\to L \to 0, 
\end{equation}where $L$ is a special, effective line bundle of degree $\delta \leq d$, i.e. $l,\;j>0$ and $h^1(\mathcal F) \ge 2$.

We first focus on the case of $h^1(\mathcal F) =2$. We start with the following:

\begin{proposition}\label{lem:i=2.1} Let $B$ be any reduced  component of $B^{k_2}_d$,  with $\dim B \geq \rho_d^{k_2}$. For  
$\mathcal F$ general in $B$, assume that it fits in an exact sequence like \eqref{exact_seq}, 
with $h^1(\mathcal F)=h^1(L)=2$. Then, $B$ coincides with the component $B_{\rm sup}$ as in Sect.\;\ref{ss:superabundant}.
\end{proposition}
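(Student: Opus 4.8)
\textbf{Proof proposal for Proposition \ref{lem:i=2.1}.}

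The plan is to run the same parametric dimension estimate as in Proposition \ref{lem:i=1.1}, now under the hypothesis $h^1(L)=2$, and show the numerics force $L\cong K_C-A$ together with $r=h^1(N)=0$, thereby identifying $\mathcal B$ with $B_{\rm sup}$ from Section \ref{ss:superabundant}. Since $\mathcal B$ is irreducible, all the integers in \eqref{degree1} are constant on a dense open subset, so a general $\mathcal F\in\mathcal B$ fits in \eqref{exact_seq} with $L$ special and effective, $j:=h^1(L)=2$, $l:=h^0(L)=\delta-g+3$, and $N\in\pic^{d-\delta}(C)$ with $r:=h^1(N)$. The constraint $h^1(\mathcal F)=2$ means the coboundary $\partial_u\colon H^0(L)\to H^1(N)$ has corank exactly $\max\{0,\,l-(l-r)\}$-type behaviour; more precisely one needs $h^1(\mathcal F)=h^1(N)+\corank(\partial_u)=2$. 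First I would pin down $\delta$: since $L$ is special, $\delta\le 2g-2$, and since $L$ carries a $g^{l-1}_\delta$ with $l-1=\delta-g+2\ge 2$, Clifford's theorem gives $\delta\ge 2(l-1)=2\delta-2g+4$, hence $\delta\ge 2g-4$; combined with semistability ($\delta\ge d/2\ge 3g/2-3/2$ when $d\ge 3g-3$, which is weaker) this confines $L$ to $\delta\in\{2g-4,2g-3,2g-2\}$, and on a general $\nu$-gonal curve the special line bundles of these degrees with $h^0\ge 3$ are severely restricted by Lemma \ref{S2} (no $g^r_{\nu-2+2r}$ with $r\ge 2$) — in fact this should force $L\cong K_C-A$, whose degree is $2g-2-\nu$ and speciality $h^1=h^0(A)=2$. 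Here I would need $\delta=2g-2-\nu$, so I must re-examine the Clifford bound allowing for the gonality pencil: the point is that $K_C-A$ is the unique special effective $L$ of speciality exactly $2$ other than sub-line bundles of $K_C$ of the form $K_C-B$ with $h^0(B)\ge 2$, and by the $\nu$-gonality of $C$ the minimal such $B$ is $A$ itself.

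The second step is the dimension count. With $L\cong K_C-A$ fixed and $r=h^1(N)$, one parametrizes $\mathcal B$ by a projective bundle over $W^{r-1}_{d-2g+2+\nu}\times C^{(\dim)}$-type base (or just over $\pic^{d-2g+2+\nu}(C)$ refined by the corank-$t$ condition), exactly as in the discussion following Claim \ref{cl:vaK-D}. Writing $m=\dim\ext^1(L,N)=2\delta-d+g-1$ (using $L\ncong N$, which holds since $\deg N<\deg L$ by \eqref{degn}-type inequalities), and using the determinantal estimate $\dim\mathcal W_t\le m-c(l,r,t)$ with the appropriate $t$ making $h^1(\mathcal F)=2$, plus Martens'/Mumford's bound on $\dim W^{r-1}_{d-2g+2+\nu}$, I would bound $\dim\mathcal B$ from above. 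The inequality $\dim\mathcal B\ge\rho_d^{k_2}=8g-2d-11$ then should force $r=0$ and $t=0$ (i.e.\ $\partial_u$ surjective is impossible when $r=0,l=2$ unless it's zero — rather, when $r=0$ one automatically has $h^1(\mathcal F)=h^1(L)=2$ with no corank condition needed), pushing $\mathcal B$ onto the full family $\mathbb P(\mathcal E_{d,\nu})$ of Section \ref{ss:superabundant}. Once $r=0$ and $L=K_C-A$, the general fiber is all of $\mathbb P(\ext^1(K_C-A,N))$, which is precisely the construction of $B_{\rm sup}$; Proposition \ref{thm:i=2.3} already shows this family is an irreducible component, so $\mathcal B=B_{\rm sup}$ by irreducibility and the coincidence of general members (using that $K_C-A$ is the unique quotient line bundle of the relevant degree and speciality, as in the birationality argument of Proposition \ref{thm:i=2.3}).

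The main obstacle I anticipate is the first step: rigorously ruling out all special effective $L$ of speciality $2$ other than $K_C-A$. A priori $L$ could be $K_C-B$ for any effective $B$ with $h^0(B)=2$, $\deg B\ge\nu$; one must show that if $\deg B>\nu$ then the resulting family of extensions is too small to give a component of dimension $\ge\rho_d^{k_2}$, and if $B$ moves in a positive-dimensional linear series one picks up extra base directions but loses more in $m$ and in the corank locus. This is essentially a bookkeeping argument: replacing $A$ by a general $B\in W^1_{\deg B}$ increases $\dim W^1_{\deg B}$ (bounded by Mumford's theorem on the $\nu$-gonal curve) but the net effect on $\dim\mathcal B=(\text{base})+m-c(l,r,t)$ is negative once $\deg B>\nu$, because the gonality pencil $A$ is the unique one of its degree while higher series are sparse. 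I would also need to handle the borderline $L\cong K_C$ with $h^1=1$ — but that is excluded by hypothesis $h^1(L)=2$ — and confirm $N\ncong L$ so that the first case of \eqref{eq:m} applies throughout. A secondary technical point is verifying the hypotheses $l\ge\max\{1,r-1\}$ and $m\ge l+1$ of Theorem \ref{CF5.8} in the sub-case $r\ge 1$, which follows from $l=\delta-g+3$ and the bounds on $\delta$, analogously to Proposition \ref{lem:i=1.1}.
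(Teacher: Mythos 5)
Your overall skeleton (classify the special quotient $L$, then run parametric dimension counts as in Proposition \ref{lem:i=1.1}) matches the paper's strategy, but there are two genuine gaps. First, the classification step is not established. Your Clifford computation has a sign error: from $l-1=\delta-g+2$ Clifford gives $\delta\le 2g-4$, not $\delta\ge 2g-4$, so $\delta$ is not confined to $\{2g-4,2g-3,2g-2\}$ (which would anyway be incompatible with the target $\delta=2g-2-\nu$, as you noticed). What is actually needed, and what the paper uses, is a structure theorem for low-degree pencils on a general $\nu$-gonal curve: since $h^1(L)=2$, $|K_C-L|$ is a pencil of degree $2g-2-\delta\le$ roughly $\tfrac{g-1}{2}$, and by \cite[Theorem\;2.6]{AC} any such pencil is the gonality pencil plus base points, so $L\cong K_C-A-B_b$ with $B_b\in C^{(b)}$, $0\le b\le \tfrac{g-1}{2}-\nu$. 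Your substitute ("the minimal such $B$ is $A$ itself", plus a bookkeeping argument for $\deg B>\nu$) never rules out pencils that are not of the form $A+\text{base points}$, and in particular never reduces to the one-parameter family of cases $b\ge 0$ that the rest of the argument must handle.

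Second, and more seriously, your plan to eliminate the remaining cases by showing they are "too small to give a component of dimension $\ge\rho_d^{k_2}$" does not work. For $L\cong K_C-A-B_b$ with $b>0$ and $N$ nonspecial, the corresponding locus has dimension $6g-6-d-2\nu-b$ (the paper's Claim \ref{Claim 1.}), and
$$(6g-6-d-2\nu-b)-\rho_d^{k_2}=d-2g+5-2\nu-b\ \ge\ g+2-2\nu-b$$
by \eqref{eq:ourbounds}, which is positive for a wide range of $b>0$; so these loci exceed the expected dimension and cannot be discarded by comparison with $\rho_d^{k_2}$ alone. The paper excludes them not by showing they are small but by showing they lie inside $\overline{B_{\rm sup}}$: after checking $\dim \mathcal B_{(b,r_b)}<\dim B_{\rm sup}$ for $(b,r_b)\neq(0,0)$, it produces a flat specialization from general extensions $0\to N\to\mathcal F\to K_C-A\to 0$ to general extensions $0\to N+B_b\to\mathcal F_b\to K_C-A-B_b\to 0$, using the surjectivity $H^1(N+A-K_C)\twoheadrightarrow H^1(N+2B_b+A-K_C)$ (elementary transformations, cf. \cite{L}); maximality of the component $\mathcal B$ then forces $\mathcal B=B_{\rm sup}$. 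Nothing in your proposal supplies this containment step, and without it the argument cannot conclude even after the classification of $L$ is repaired. (The case $r_b>0$ with $b=0$ similarly requires the observation that such $N$ lie in a proper Brill--Noether sublocus of ${\rm Pic}^{d-2g+2+\nu}(C)$, so the corresponding bundles are degenerations inside $B_{\rm sup}$, not a separate component.)
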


\begin{proof}  Since $\mathcal F$ is  semistable, from \eqref{eq:neccond} and \eqref{eq:ourbounds} one has  
$\deg L\;\ge \frac{3g-1}{2}$. Moreover, since $C$ is a general $\nu$-gonal curve and $h^1(L)=2$, from \cite[Theorem\;2.6]{AC} we have
$|\omega_C\otimes L^{-1}|=g^1_{\nu}+B_b$, where $B_b$ is a base locus of degree $b$.
Hence $L\simeq K_C -A-B_b,$ where $b\le\frac{g-3}{2}-\nu$.
For simplicity, put $\delta := \deg L = 2g-2 - \nu - b$ so $\deg N = d-\delta$.

Since $B$ is reduced, one must have  
$$\dim B = \dim T_{\mathcal F} B$$ 
for  general $\mathcal F \in B$.  We will prove the Proposition  by showing that $\dim B = \dim T_{\mathcal F} B$  can occur only if $L=K_C-A$ and $N$ is non-special, general of its degree.

\begin{claim} \label{Claim 1.}  $\dim B \leq  \begin{cases}
 6g-d-2\nu -6-b  &\text{ if } h^1 (N)=0\\
9g -2d -3\nu -2r -2b-7\ &\text{ if } h^1 (N) \geq 1
\end{cases}$
\end{claim}

\begin{proof} [Proof of Claim \ref{Claim 1.}]   We will use  notation as 
in \eqref{degree1}.  Since  $B$ is irreducible, all integers in \eqref{degree1}  are constant for a general $\mathcal F \in B$. From \eqref{exact_seq} combined  with $L=K_C -A-B_b$, it follows there exists an open dense subset $S$  of a closed subvariety of ${\rm Pic}^{d -\delta} \times C^{(b)}$ and a projective bundle $\mathcal P \to S$, whose general fiber 
identifies with $\mathbb P=\mathbb P(H^0(K_C+L-N)^*) = \mathbb{P} ({\rm Ext}^1(L,N)) \cong \mathbb{P}^{m-1}$, where $m:=\dim ({\rm Ext}^1(L,N))$. Since $h^1(\mathcal F) = h^1(L)$, as in \cite[Sect.\;6]{CF}, the component $B$ has to be the image of $ \mathcal P$  via a dominant rational map 
\[\begin{array}{ccc}
\mathcal P & \stackrel{\pi}{\dasharrow} & B \subset B_{d}^{k_2}\\
\downarrow & & \\
S & & 
\end{array}
\](cf.\;\cite[Sect.\;6]{CF} for details).  Therefore  we obtain  $\dim B \leq  \dim \mathcal P= \dim S + m-1$  since $\mathcal P$ is a projective bundle over $S$ whose general fiber is  $(m-1)$-dimensional.
Specifically,  if  $r\geq 1$ then $S$ is a subset of  $W^{d- \delta -g +r}_{d-\delta}\times C^{(b)}$, \color{black} the latter being equivalent  to $ W^{r-1}_{2g-2+\delta -d}\times C^{(b)}$ by Serre duality, and   $\dim W^{r-1}_{2g-2+\delta -d} \leq 2g-2+\delta-d-2(r -1)$ by using   Martens' theorem (cf.\;\cite[Theorem\;5.1]{ACGH}) for $r\geq 2$. 
Therefore,  we get
$$\dim S \leq
\begin{cases}
g+b &\text{ if } r=0\\
 2g-2+\delta-d-2r+2+b \ &\text{ if } r \geq 1.
\end{cases}$$ 
This inequality, combined  with   \eqref{eq:m}, gives
$$\dim B \leq  \begin{cases}
(g+b )+2\delta -d +g-2   &\text{ if } r=0\\
( 2g-2+\delta-d-2r+2+b) +2\delta -d +g -1\ &\text{ if } r \geq 1,
\end{cases}$$
 since a non-special line bundle cannot be isomorphic to a special one. By substituting  $\delta =2g-2 -\nu -b$, we get the conclusion of Claim \ref{Claim 1.}.
\end{proof}

\begin{claim} \label{Claim 2.}  $\dim T_\mathcal F(B)\ge 6g-d-2\nu-2r-6$ 
\end{claim}
\begin{proof} [Proof of Claim \ref{Claim 2.}]
 The tangent space  $T_\mathcal F(B)$ is the orthogonal space to the image
of  the Petri map: 
$$ \mu_\mathcal F : H^0(\mathcal F)\otimes H^0(\omega_C\otimes \mathcal F^*)
\to H^0(\omega_C\otimes \mathcal F^*\otimes \mathcal F),$$so
$\dim T_\mathcal F(B)=\dim (\im(\mu_\mathcal F)^\perp)= 
h^0(K_C\otimes F^*\otimes F)-h^0(\mathcal F)h^1(\mathcal F)+\dim \ker \mu_\mathcal F$.

From the exact sequence \eqref{exact_seq}, we get $H^0(\mathcal F)\simeq H^0(N)\oplus W$ 
where $W:= \im (H^0(\mathcal F)\to H^0(L))$. 
Since $H^1(\mathcal F)\simeq H^1(L)$, the connecting
homomorphism in \eqref{exact_seq} is surjective, hence $\dim W=l-r= h^0(L)- h^1 (N)$. \color{black}
Let $\mu_{N,\omega_C\otimes L^{-1}}$ and $\mu_{0,W}$ be the maps defined as follows:
\begin{eqnarray*}
\mu_{N,\omega_C\otimes L^{-1}}  & : & H^0(N)\otimes H^0(\omega_C\otimes L^{-1})\to H^0(N\otimes \omega_C\otimes L^{-1})\\
\mu_{0,W} & : & W \otimes H^0(\omega_C \otimes L^{-1})\hookrightarrow H^0(L)\otimes H^0(\omega_C\otimes L^{-1})\to H^0(\omega_C) 
\end{eqnarray*}
Then we have 
\begin{equation}\label{kernel}
 \dim\ker\mu_\mathcal F\ge \dim \ker \mu_{N,\omega_C\otimes L^{-1}}+\dim\ker\mu_{0,W}
 \end{equation}
by the following commutative diagram:
{\small
\begin{center}
  \begin{picture}(380,100)
    \put(10,95){$H^0(\mathcal F)\otimes H^0(\omega_C\otimes \mathcal F^*)$}
    \put(130,98){\vector(1,0){60}}
    \put(157,101){$\mu_{\mathcal F}$}
    \put(260,95){$H^0(\omega_C\otimes \mathcal F\otimes\mathcal F^*)$}
    \put(45,55){$\wr\hspace{-.1cm}\parallel$}
    \put(-10,10){($H^0(N)\oplus W)\otimes H^0(\omega_C\otimes L^{-1})$}
     \put(200,10){($H^0(N)\otimes H^0(\omega_C\otimes L^{-1}))\oplus (W\otimes H^0(\omega_C\otimes L^{-1}))$,}
     \put(130,12){\vector(1,0){60}}
     \put(320,52){$H^0(\omega_C)$}
     \put(210,52){$H^0(\omega_C\otimes L^{-1}\otimes N)$}
     \put(270,32){$\mu_{N,\omega_C\otimes L^{-1}}$}
     \put(340,32){$\mu_{0,W}$}
     \put(265,22){$\vector(0,1){20}$}
     \put(335,22){$\vector(0,1){20}$}
     \put(265,65){$\vector(0,1){20}$}
     \put(335,65){$\vector(0,1){20}$}
     \put(270,70){$\alpha$}
     \put(340,70){$\beta$}
     \put(157,15){$\cong$}
  \end{picture}
\end{center}
}
\noindent 
where the map $\beta$ comes from the trivial section of $H^0(\mathcal F \otimes \mathcal F^*)$ after tensoring via $\omega_C$; to explain the map $\alpha$, if one takes the diagram 
determined by the exact sequence \eqref{exact_seq} and its dual sequence and tensor it by $\omega_C$, one gets: 
\begin{equation*}
\begin{array}{ccccccccccccccccccccccc}
&&0&&0&&0&&\\[1ex]
&&\downarrow &&\downarrow&&\downarrow&&\\[1ex]
0&\lra& \omega_C \otimes N \otimes L^{-1}& \rightarrow & \omega_C \otimes \mathcal F\otimes L^{-1}& \rightarrow & \omega_{C}&\rightarrow & 0 \\[1ex]
&&\downarrow && \downarrow && \downarrow& \\[1ex]
0&\lra &  \mathcal \omega_C \otimes  N \otimes\mathcal F^*& \rightarrow & \omega_C \otimes \mathcal F\otimes \mathcal F^* & \rightarrow & \omega_C \otimes L \otimes \mathcal F^* &\rightarrow & 0\\[1ex]
&&\downarrow &&\downarrow &&\downarrow  &&\\[1ex]
0&\lra & \omega_{C}& \lra &\omega_C\otimes   \mathcal F \otimes N^{-1}&\lra& \omega_C \otimes L \otimes N^{-1}&\rightarrow & 0 \\[1ex]
&&\downarrow &&\downarrow&&\downarrow&&\\[1ex]
&&0&&0&&0&&;
\end{array}
\end{equation*}
\color{black}
the map $\alpha$ is the composition of the two injections 
$$H^0(\omega_C \otimes N \otimes L^{-1}) \hookrightarrow H^0(\omega_C \otimes \mathcal F\otimes L^{-1}) \hookrightarrow H^0(\omega_C \otimes \mathcal F\otimes \mathcal F^*).$$

Since $K_C - L = A + B_b$, by the base point free pencil trick, we have
\begin{eqnarray*}
\dim \ker\mu_{N,\omega_C\otimes L^{-1}}  =  h^0(N-A) & = &  \deg N-\deg A -g +h^0(K_C-N+A)+1 \\\notag
&\ge& d-\delta - \nu  - g + 1  =  d - 3g + 3 + b.\notag
% & = & d  - (2g-2- \nu - b) - \nu - g + 1\\ \notag
\end{eqnarray*}
From  $\dim W=h^0(L)-r$, it follows that $\dim\ker\mu_{0,W}\ge\dim\ker\mu_0(L) - 2r $, where
$$\mu_0(L) : H^0(L) \otimes H^0(K_C-L) \to H^0(K_C).$$
%  with $K_C-L = A + B_b$, 
To compute $\dim \ker\mu_0 (L)$, we  apply once  again the base point free pencil trick which gives 
\begin{eqnarray*}
\dim \ker\mu_0 (L) & = &   h^0(L-A) = h^0(K_C - 2A -  B_b) \\
 & = & 2g-2 - 2\nu - b - g + h^0(2A+B_b) +1 \\
   & \geq & g-2\nu  - b + 2,
\end{eqnarray*}the latter inequality following from the fact that $h^0(2A+B_b) \geq 3$. Hence, from \eqref{kernel}, one has:
\begin{eqnarray*}
\dim \ker\mu_{\mathcal F} & \ge &   d - 3g + 3 + b + g-2\nu  - b + 2 - 2 r  \\
&=& d-2g-2\nu-2r+5. 
\end{eqnarray*}The previous inequality gives  $\dim T_\mathcal F(B)\ge 6g-d-2\nu-2r-6$, proving Claim \ref{Claim 2.}.
\end{proof}
Assume that  $h^1 (N) \geq 1$. Then,
 Claims \ref{Claim 1.}, \ref{Claim 2.}  and  \eqref{eq:ourbounds} imply that 
$$\dim T_{\mathcal F} B - \dim B \geq d-3g +\nu +2b +1 \geq \nu +2b.$$Thus  the equality $\dim B = \dim T_{\mathcal F} B$ cannot  occur for $h^1 (N) \geq 1$; therefore, $N$ must be non-special.  In this case, $\dim B = \dim T_{\mathcal F} B$ holds if and only if $b=0$ and $N$ is general of its degree.  Consequently, the Proposition is proved.
\end{proof}

Thus, the only remaining case is the following:

\begin{proposition}\label{lem:i=1.1} Let $B$ be any reduced component of $B^{k_2}_d $,  
with $\dim B \geq \rho_d^{k_2}$.  Assume that a general element
$\mathcal F$ of $B$ fits in the following exact sequence;
\begin{equation}\label{exact_seq_b}
 0\to N\to \mathcal F\to L \to 0,  
\end{equation}where $h^1(\mathcal F)=2$ and $h^1(L)=1$. Then, $B$ coincides with the component $B_{\rm reg}$ as in Sect.\;\ref{ss:regular}.
\end{proposition}
\begin{proof}  We will use  notation as 
in \eqref{degree1}.  Since  $B$ is irreducible, all integers in \eqref{degree1}  are constant for a general $\mathcal F \in B$. Then $\frac{3g-1}{2} \leq \delta\le 2g-2$,  since $L$ is special and $\mathcal F$ is semistable.  Hence
\begin{equation}\label{degn}
g-1\le \deg N=d-\delta\le d/2 \le 2g-3\nu .
\end{equation}
By  \eqref{exact_seq_b}, the line bundle $N$ is special and the corresponding coboundary map $\partial$ is of corank one.  As in the proof of Proposition \ref{lem:i=2.1}, for a suitable open dense subset $S$ of $W^{r-1}_{2g-2 +\delta -d}\times C^{(2g-2-\delta)}$,
 one has a projective bundle $\mathbb P(\mathcal E)\to S,$  whose general fiber is $\widehat{\mathcal W}_1:= \mathbb P(\mathcal W_1)$, where $ \mathcal W_1:=\{u\in\ext^1(L,N)\ |\ {\rm corank} (\partial_u)\ge1\}$. Then  the component $B$ is the image of $ \mathcal P$  via a dominant rational map $\mathcal P  \stackrel{\pi}{\dasharrow}  B \subset B_{d}^{k_2}$  (cf.\;\cite[Sect.\;6]{CF} for details).  Hence 
$$ \dim  B \leq \dim W^{r-1}_{2g-2-d+\delta}+2g-2-\delta+\dim \widehat{\mathcal W}_1.$$
 Since from \eqref{degn} $\deg (K_C-N)\le g-1$,  by Martens'  theorem \cite[Thm.\;(5.1)]{ACGH} we obtain
$$\dim W^{r-1}_{2g-2+\delta-d}\le
\begin{cases}
 2g-2-d+\delta=\deg(K_C-N)  &\text{ if } r=1 \\
 2g-2-d+\delta-2r+1 \ &\text{ if }  r\geq 2.
\end{cases} $$
Note that  $m\geq g+2\delta-d -1$  by \eqref{eq:m}, where $m:=\dim ({\rm Ext}^1(L,N))$. Thus it follows  that  $l\ge r$ and $m\ge l+1$ since    $l=h^0(L)=\delta-g+2\ge \frac{g+3}{2}$ and $r-1 \leq \frac{\deg (K_C-N)}{2}$. Applying  Theorem \ref{CF5.8}, we get
$ \dim \widehat{\mathcal W}_1 = m-l+r-2 =  m - \delta +g +r - 4$, whence
\begin{eqnarray*} \dim  B &\le& \dim W^{r-1}_{2g-2-d+\delta} +(2g-2-\delta ) +  m - \delta +g +r - 4\\
&\le&\begin{cases}
5g -d-\delta-7+ m  &\text{ if }  r=1 \\
5g -d-\delta -r-7+ m\ &\text{ if }  r \geq 2,
\end{cases}
\end{eqnarray*}

Assume that  $r\ge 2$; this implies that $N$ cannot be isomorphic to $L$. Therefore  \eqref{eq:m} gives $m=2\delta -d+g-1$.  Thus we have
$$ \rho^{k_2}_d \leq \dim  B\le 6g-2d+\delta-r-8, $$ 
which cannot occur since  $\rho^{k_2}_d=8g-2d-11$ and $\delta \leq 2g-2$.
Therefore, we must have $r= 1$. Then by  \eqref{eq:m}  we get 
\begin{equation}\label{dimB}
\dim B \leq
\begin{cases}
\ (5g -d-\delta-7)+2\delta-d+g-1\ &\text{ if } L\ncong N \\
\ (5g -d-\delta-7)+g\ &\text{ if } L\cong  N.
\end{cases}
\end{equation}
If $L\cong N$ then we have $8g-2d-11 \leq \dim B \leq 6g-d-\delta -7$ which yields  $\deg N =d-\delta \geq 2g -4$. This is a contradiction to \eqref{degn}. Accordingly, we have $L\ncong N$ and hence  by \eqref{dimB}
$$8g-2d-11 \leq \dim B \leq 6g-2d+\delta -8,$$
which implies $\delta \geq 2g-3$. 
Since $L$ is a special line bundle,  it turns out that either $L\simeq K_C$ or $L\simeq K_C(-p)$ for some $p\in C$.

If $L\simeq K_C$,  let $\Gamma$ be the section of the ruled surface $F = \mathbb{P}(\mathcal F)$ corresponding to the quotient $\mathcal  F \to \!\!\!\! \to K_C$; then 
$\dim  |\mathcal {O}_{F}(\Gamma)| = 1$ by \cite[(2.6)]{CF} and the fact that $h^1(\mathcal F) =2$. By  \cite[Prop.\;2.12]{CF} any such $\mathcal F$ admits therefore $K_C-p$ as a quotient line bundle, for some $p \in C$. This completes the proof since $N$ is special. 
\end{proof}

\begin{remark}\label{rem:min} {\normalfont (i) From the proof of Proposition \ref{lem:i=1.1}, 
it also follows that $K_C-p$ is minimal among special quotient line bundles for $\mathcal F$ general in the component $B_{\rm reg}$, completely proving Theorem \ref{thm:main2} (i).

\medskip

\noindent
(ii) Notice moreover that, from the same proof, $\mathcal F$ general in  $B_{\rm reg}$ admits also a {\em presentation} via a canonical quotient, i.e. $0 \to K_C-D-p \to \mathcal F \to K_C\to 0 $, which on the other hand is not via a quotient line bundle of $\mathcal F$ of minimal degree among special quotients and whose residual presentation coincides with that in the proof of \cite[Theorem]{Teixidor1}, 
i.e. $0 \to \mathcal O_C \to \mathcal  E \to  L \to 0$, where $\mathcal E = \omega_C \otimes \mathcal F^*$ and $L = \mathcal O_C(D+p)$. In other words, the component  $B_{\rm reg}$ coincides with that in  \cite[Theorem]{Teixidor1}; the minimality of $K_C-p$ for $\mathcal F$ reflects in our presentation Theorem \ref{thm:main} (i)  via a special section of $\mathcal E$  whose zero locus is of degree one.

}
\end{remark}

We now consider the case $h^1(\mathcal F) = i \ge 3$.

\begin{proposition} There is no reduced component of $B_d^{k_2}$ whose general member $\mathcal F$ is of speciality $i \ge 3$. 
\end{proposition}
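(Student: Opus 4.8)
The plan is to argue by contradiction. Suppose $\mathcal{B}\subseteq B^{k_2}_d$ is an irreducible component whose general member $\mathcal{F}$ has $h^1(\mathcal{F})=i\ge 3$. Since $B^{k_2}_d$ is, locally at $\mathcal{F}$, the degeneracy locus of a morphism of vector bundles (the standard local description of Brill--Noether loci, cf.\ \cite[Ch.\;IV]{ACGH}, which applies verbatim in the vector bundle setting --- cf.\ also the construction of $\mathcal{E}_{d,\nu}$ in Sect.\;\ref{ss:superabundant}), every component of $B^{k_2}_d$ has dimension at least $\rho_d^{k_2}=8g-11-2d$; so it will be enough to contradict $\dim\mathcal{B}\ge 8g-11-2d$. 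By Theorem \ref{CF} I write the general $\mathcal{F}\in\mathcal{B}$ as $0\to N\to\mathcal{F}\to L\to 0$ with $L$ special and effective; keeping notation \eqref{degree1}, put $j:=h^1(L)$, $\delta:=\deg L$, $r:=h^1(N)$. Semistability gives $\lceil d/2\rceil\le\delta\le 2g-2$, while the cohomology sequence gives the surjection $H^1(\mathcal{F})\twoheadrightarrow H^1(L)$ (hence $1\le j\le i$) together with $h^1(\mathcal{F})=j+\mathrm{corank}(\partial_u)$, so that $\mathcal{F}=\mathcal{F}_u$ with $u\in\mathcal{W}_{i-j}$ as in \eqref{W1} and $r\ge i-j$. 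The argument then splits into the cases $j\ge 2$ and $j=1$.

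For $j\ge 2$ I will show $\mathcal{B}\subseteq B_{\rm sup}$; this already yields the contradiction, since the general member of $B_{\rm sup}$ has speciality $2<i$, so the inclusion is strict and $\mathcal{B}$ cannot be a component of $B^{k_2}_d$. Here $K_C-L$ is special with $h^0(K_C-L)=j$, so, $C$ being a general $\nu$-gonal curve, \cite[Theorem\;2.6]{AC} and Lemma \ref{S2} force the moving part of $|K_C-L|$ to be $(j-1)A$; thus $K_C-L=(j-1)A+B$ with $B$ effective of degree $b\ge 0$, and $L\cong (K_C-A)(-D')$ for an effective $D'$ of degree $(j-2)\nu+b$. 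Setting $\widetilde N:=N(-D')$, of degree $d-2g+2+\nu$, and tensoring $0\to\mathcal{O}_C\to\mathcal{O}_C(2D')\to\mathcal{O}_{2D'}\to 0$ by $\widetilde N+A-K_C$, one gets a surjection $\mathrm{Ext}^1(K_C-A,\widetilde N)\twoheadrightarrow\mathrm{Ext}^1(L,N)$; arguing exactly as in the proof of Claim \ref{Claim 3.}, this exhibits $\mathcal{F}$ as a flat specialization of a stable extension $\mathcal{F}_0$ of $K_C-A$ by $\widetilde N$. Since $\deg(\widetilde N+A-K_C)=d-4g+4+2\nu<0$ by \eqref{eq:ourbounds}, the rank of $\mathcal{E}_{d,\nu}$ is constant on all of $\mathrm{Pic}^{d-2g+2+\nu}(C)$, so by Proposition \ref{thm:i=2.3} every such $\mathcal{F}_0$ lies in $B_{\rm sup}$; hence $\mathcal{F}\in B_{\rm sup}$ and $\mathcal{B}\subseteq B_{\rm sup}$.

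For $j=1$ I will instead get the contradiction by a dimension count. Now $L\cong K_C-P$ with $P\in C^{(2g-2-\delta)}$ and $h^0(P)=1$, while $\mathrm{corank}(\partial_u)=i-1$ forces $r\ge i-1\ge 2$, so $N$ is special with $r\ge 2$ (in particular $L\ncong N$, as $N\cong L$ would give $r=1$). Parametrizing $\mathcal{B}$ as in Sects.\;\ref{ss:regular}--\ref{ss:noother} by a family over a base $S\subseteq C^{(2g-2-\delta)}\times W^{r-1}_{2g-2-d+\delta}$ with fibre $\mathbb{P}(\mathcal{W}_{i-1})$, one has
\[
\dim\mathcal{B}\le (2g-2-\delta)+\dim W^{r-1}_{2g-2-d+\delta}+\dim\mathcal{W}_{i-1}-1 ,
\]
with $m:=\dim\mathrm{Ext}^1(L,N)=2\delta-d+g-1$ by \eqref{eq:m}, $l:=h^0(L)=\delta-g+2$, and $\dim\mathcal{W}_{i-1}\le m-c(l,r,i-1)$ where $c(l,r,t)=t(l-r+t)$ is the expected codimension of $\mathcal{W}_t$ (cf.\ \cite[Sect.\;5.2]{CF}). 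As $2g-2-d+\delta\le g-1$ by \eqref{eq:ourbounds} and $r\ge 2$, Martens' theorem \cite[Theorem\;(5.1)]{ACGH} bounds $\dim W^{r-1}_{2g-2-d+\delta}$ by $2g-2-d+\delta-2(r-1)$; feeding this into the display and using $i\ge 3$, $\nu\ge 3$ and \eqref{eq:ourbounds} will give $\dim\mathcal{B}<8g-11-2d=\rho_d^{k_2}$, the required contradiction.

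The hard part will be this last estimate: Theorem \ref{CF5.8} supplies $\dim\mathcal{W}_t\le m-c(l,r,t)$ only for $t=1$, so for $i\ge 4$ one needs the analogous bound for the determinantal scheme $\mathcal{W}_t$ with $t\ge 2$ (it follows from the Eagon--Northcott description of $\mathcal{W}_t$ underlying \cite[Sect.\;5.2]{CF}, checking that the relevant linear section meets the generic determinantal variety properly); the weaker inclusion $\mathcal{W}_{i-1}\subseteq\mathcal{W}_1$ alone is not sufficient when $\delta$ is close to $2g-2$. A secondary technical point, in the case $j\ge 2$, is to verify that the lifted extension $\mathcal{F}_0$ is stable (or a limit of stable bundles), so that it genuinely lies in $B_{\rm sup}$.
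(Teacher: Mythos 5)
Your outline rests on two steps that are genuine gaps, not routine verifications. The decisive one is the bound $\dim\mathcal W_{t}\le m-c(l,r,t)$ for $t=i-1\ge 2$, on which your whole $j=1$ case depends. For a determinantal scheme the automatic inequality goes the \emph{other} way: every component of $\mathcal W_t$ has codimension at most $c(l,r,t)$, and the entire content of \cite[Theorem\;5.8]{CF} (Theorem \ref{CF5.8} here) is that for $t=1$ the expected codimension is actually attained, proved via the secant-variety interpretation of $\mathbb P(\mathcal W_1)$; nothing in the Eagon--Northcott formalism gives the analogous upper bound for $t\ge 2$, and ``checking that the relevant linear section meets the generic determinantal variety properly'' is precisely the statement to be proved, not a verification -- moreover you never carry out the resulting numerical estimate. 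A second gap, in the case $j\ge 3$: the assertion that the moving part of $|K_C-L|$ is $(j-1)A$ does not follow from Lemma \ref{S2} (which only excludes $g^r_{\nu-2+2r}$, $r\ge 2$) nor from \cite[Theorem\;2.6]{AC} (which concerns pencils, i.e.\ the case $j=2$ used in Proposition \ref{lem:i=2.1}); for $h^0(K_C-L)\ge 3$ on a general $\nu$-gonal curve this decomposition requires a separate argument with explicit degree bounds. The stability (or limit-of-stable) issue for the lifted extension, which you flag yourself, is a further loose end.

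The paper settles the statement by a completely different and much shorter argument that bypasses extensions altogether: if the general $\mathcal F$ of a component had $h^1(\mathcal F)=i\ge 3$, then Riemann--Roch gives $h^0(\mathcal F)=k_i>k_2$, so the component would be entirely contained in $B_d^{k_i}$, and in particular its general point would lie in ${\rm Sing}(B_d^{k_2})$ (cf.\ \cite[p.\;189]{ACGH}); by Laumon's Lemme 2.6 \cite{L}, proved by the elementary-transformation technique of \cite[pp.\;101--102]{L} (the same device you borrow from Claim \ref{Claim 3.}), no component of $B_d^{k_2}$ can be entirely contained in, or coincide with, a component of $B_d^{k_i}$ for $i\ge 3$. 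So the case division $j\ge 2$ versus $j=1$, the absorption into $B_{\rm sup}$ and the dimension counts are all unnecessary; if you wish to keep your route, you must supply the $\mathcal W_t$-bound for $t\ge 2$ and the gonal decomposition of $|K_C-L|$, neither of which is available in the cited sources.
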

\begin{proof} If $\mathcal F \in B_d^{k_2}$ is such that $h^1(\mathcal F) = i \ge 3$, 
then by the Riemann-Roch theorem $h^0(\mathcal F) = d - 2g + 2 + i = k_2 +(i-2) = k_i > k_2$. Thus $\mathcal F \in {\rm Sing} (  B_d^{k_2})$ (cf. \cite[p.\;189]{ACGH}). Therefore the statement follows. 
\end{proof}

%%%%%%%%%%%%%%%%%%%%%%%%%%%%%%%%%%%%%%%%%%%%%%%%%%%%%%
%%%%%%%%%%%%%%%%%%%%%%%%%%%%%%%%%%%%%%%%%%%%%%%%%%%%%%

%
% 

\end{document}